\theoremstyle{plain}
\newtheorem{theorem}{Theorem}
\newtheorem{lemma}[theorem]{Lemma}
\theoremstyle{definition}
\newtheorem{remark}[theorem]{Remark}
\newcommand{\RR}{\mathbb{R}}
\newcommand{\NN}{\mathbb{N}}
\newcommand{\setN}{\mathcal{N}}
\newcommand{\setK}{\mathcal{K}}
\newcommand{\setT}{\mathcal{T}}
\newcommand{\setV}{\mathcal{V}}
\newcommand{\blkdiag}{\mathrm{blkdiag}}
\newcommand{\col}{\mathrm{col}}
\title{\LARGE \bf Structured linear quadratic control computations over 2D grids}
\author{Armaghan Zafar and Ian R. Manchester
	\thanks{The authors are with the Australian Centre for Field Robotics, and Sydney Institute for Robotics and Intelligent Systems, The University of Sydney, Sydney, NSW 2006, Australia
		(e-mail: {\tt\small  armaghan.zafar. ian.manchester@sydney.edu.au}).}%
}
\begin{document}

\maketitle
\pagestyle{empty}

\begin{abstract}
    In this paper, we present a structured solver based on the preconditioned conjugate gradient method (PCGM) for solving the linear quadratic (LQ) optimal control problem for $K \times N$ sub-systems connected in a two-dimensional (2D) grid structure. Our main contribution is the development of a structured preconditioner based on a fixed number of inner-outer iterations of the nested block Jacobi method. We establish that the proposed preconditioner is positive-definite. Moreover, the proposed approach retains structure in both spatial dimensions as well as in the temporal dimension of the problem. The arithmetic complexity of each PCGM step scales as $O(KNT)$, where $T$ is the length of the time horizon. The computations involved at each step of the proposed PCGM are decomposable and amenable to distributed implementation on parallel processors connected in a 2D grid structure with localized data exchange. We also provide results of numerical experiments performed on two example systems.
\end{abstract}

\section*{Notations}\label{sec:notations}
	In this paper, we use $\mathbb{R}:=(-\infty,+\infty)$ and $\mathbb{N}:=\{1,2,\ldots\}$ to denote the set of real numbers and natural numbers, respectively. We denote an $n$-dimensional real vector with $\mathbb{R}^n$ and a real matrix with $n$ rows and $m$ columns with $\mathbb{R}^{n\times m}$. $\mathcal{A} \backslash\{b\}$ denotes all elements of the set $\mathcal{A}$ except element $b$. $A^\prime$ denotes the transpose of a matrix. $A \succ 0$ means the symmetric matrix $A=A^\prime \in\mathbb{R}^{n\times n}$ is positive definite (i.e., there exists $c>0$ such that $x^\prime A x \geq c x^\prime x$ for all $x\in\mathbb{R}^n$) and $A \succeq 0$ means $A$ is a positive semi-definite (i.e., $x^\prime A x \geq 0$ for all $x\in\mathbb{R}^n$). ``$\mathrm{blkdiag}(.)$"" represents construction of block diagonal matrix from input arguments and ``$\mathrm{col}(.)$" represents concatenation of input arguments as column vector.

\section{Introduction}\label{sec:introduction}
    We are interested in solving finite-horizon linear-quadratic (LQ) optimal control problems with discrete-time dynamics arising from the interconnection of sub-systems in a two-dimensional (2D) grid graph structure as shown in Fig.\ref{fig:2d_graph}.
	\begin{figure}
		\centering
		\includegraphics[width = 0.46\textwidth]{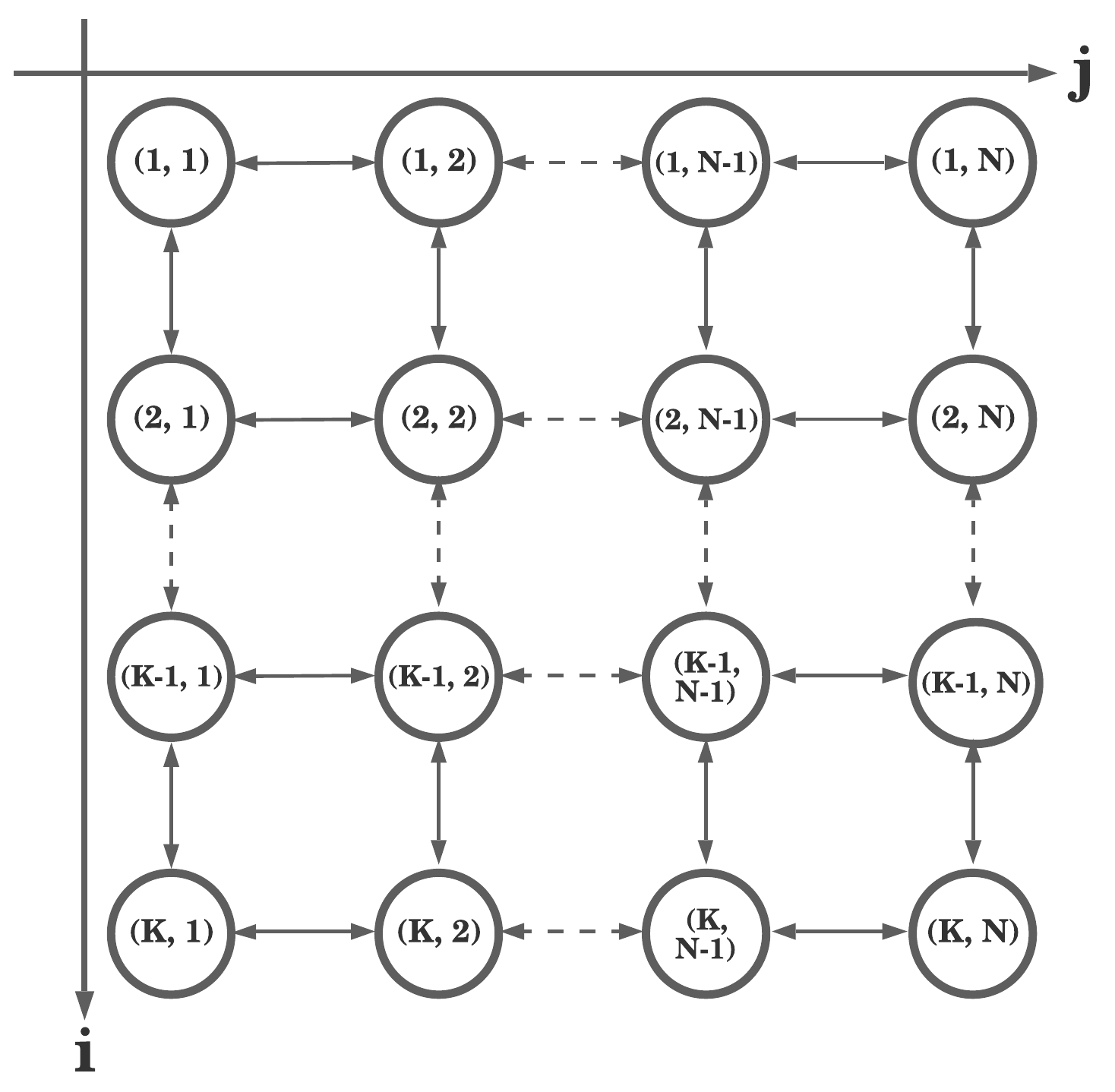}
		\caption{Bi-directional flow of information between subsystems connected in a 2D grid}
		\label{fig:2d_graph}
	\end{figure}
	Each node is a sub-system and is labeled with subscript $(i,j)$ with $i \in \setK := \{1,2,\ldots ,K\} \subset\NN $ and $j~\in~\setN:= \{1,2,\ldots, N\} \subset\NN$, where $K$ is the total number of sub-systems along the vertical direction and $N$ is the total number of sub-systems in the horizontal direction.
	We assume that there is a bi-directional flow of information between adjacent sub-systems, as shown in Fig.\ref{fig:2d_graph}. That is, each sub-system $(i,j)$ is coupled, at max, with $4$ of the neighbouring sub-systems. 
	Spatio-temporal structure of this kind is relevant in many applications such as automated water irrigation networks \cite{Li2005water}, radial power distribution networks \cite{Peng2018distributed}, control of traffic signals \cite{Liu2022optimal}, spreading processes such as wildfires \cite{Karafyllidis1997model, Somers2019priority} and in the discretization of 2D partial differential equations \cite{Maurer2000optimization}.

    The LQ optimal control problem that we are interested in is a large-scale quadratic program with O(KN T) decision variables, where T is the length of the time horizon. We can solve this problem by solving the linear system of equations that arise from the corresponding first-order optimality conditions, known as KKT conditions. Due to the special spatio-temporal structure of the problem, this linear system of equations is also highly structured.
	%
    %
	%

    Previous research has studied the temporal structure of the LQ optimal control problem in \cite{Rao1998application, Wang2010fast, Shahzad2012stable, Nielsen2019direct}, but these approaches do not utilize the structure in the spatial dimensions of the problem. Structured solvers for LQ optimal control problems related to path-graph networks (i.e., 1D chains) are proposed in \cite{Rey2020admm,  Zafar2019optimal, Zafar2022structured}, and the computations are decomposable and amenable to distributed implementation on parallel processors with path-graph data exchange. However, these approaches do not exploit the structure in both spatial dimensions simultaneously, as we do in this paper. In \cite{Nedic2010constrained, Falsone2017dual}, structured methods are proposed for solving optimal control problems of multi-agent networks, which can be used to solve the LQ optimal control problem we are interested in. However, these approaches often take a large number of iterations to converge. All of these approaches offer decomposable computations, but they do not exploit the complete three-dimensional spatio-temporal structure of the problem. In \cite{Zafar2020linear}, a structured solver is proposed for a special case where sub-systems are connected in a directed tree graph structure.
    
	The main contribution of this paper extends the developments presented in \cite{Zafar2022structured} to the case of 2D grids. We present a structured solver based on preconditioned conjugate gradient method (PCGM) for solving the KKT conditions. Specifically, we propose a structured preconditioner based on nested block Jacobi method (NBJM) to improve the conditioning of the linear system. The proposed approach retains structure in all three dimensions of the problem, extending the work in \cite{Zafar2022structured} which explored structure in two dimensions. We show that the proposed preconditioner is positive-definite. Moreover, the arithmetic complexity of each PCGM step scales as $O(KNT)$. The computations are decomposeable and amenable to distributed implementation on $O(NT)$ parallel processors connected in a 2D grid with localized data exchange.
	
	The rest of the paper is organized as follows. In Section~\ref{sec:problem_formulation}, we present the structured formulation of the problem and the corresponding KKT conditions. In Section~\ref{sec:pcgm}, we provide an overview of the PCGM. The structured preconditioner based on NBJM is developed in Section~\ref{sec:nested_jacobi_preconditioner}. We explore performance of the proposed PCGM numerically, for a 2D mass-spring-damper network and an automated water irrigation network, and present results in Section~\ref{sec:numerical_experiments}. Finally, some concluding remarks and possible directions of future work are presented in \ref{sec:conclusions}.

\section{Problem Formulation}\label{sec:problem_formulation}
    The state space dynamics of each sub-system $(i,j)$ are
    \begin{equation}\label{eq:2d_graph_dynamics}
		\begin{aligned}
		      x_{i,j,t+1} =~ &A_{i,j,t}x_{i,j,t} + B_{i,j,t}u_{i,j,t}\\
            &+ E_{i,j,t}x_{i,j-1,t} + F_{i,j,t}x_{i,j+1,t}\\
            &+ G_{i,j,t}x_{i-1,j,t} + H_{i,j,t}x_{i+1,j,t}
		\end{aligned}
    \end{equation}
    where $x_{i,j,t}\in\RR^{n_{i,j}}$ and $u_{i,j,t}\in\RR^{m_{i,j}}$  are the state and input of sub-system $(i,j)\in\setK \times \setN$ at time $t\in\setT:=\{0,1,\ldots,T\}\subset\NN\cup\{0\}$, respectively. Initial conditions are given by $x_{i,j,0} = \gamma_{i,j} \in \RR^{n_{i,j}}$ and the spatial boundary conditions are given by
    $x_{0,j,t}=\underline{\alpha}_{j,t} \in\RR^{n_{0,j}}$, $x_{K\!+\!1,j,t}=\overline{\alpha}_{j,t} \in\RR^{n_{K\!+\!1},j}$ and
    $x_{i,0,t}=\underline{\beta}_{i,t} \in\RR^{n_{i,0}}$, $x_{i,N\!+\!1,t}=\overline{\beta}_{i,t} \in\RR^{n_{i,N\!+\!1}}$ for $t\in\setT$.
 
	We are interested in the following finite-horizon linear-quadratic (LQ) optimal control problem:
	\begin{subequations}\label{eq:LQ_problem}
		\begin{equation}\label{eq:cost_function}
			\min_{x_{i,j,t},~u_{i,j,t}
			}\quad 
			\frac{1}{2}
			\sum_{t\in\setT}
			\sum_{j\in\setN}
			\sum_{i\in\setK}
			\ell_{i,j,t}(            
			x_{i,j,t},
			u_{i,j,t})
		\end{equation}
		\text{subject to}
		\begin{align}
			\eqref{eq:2d_graph_dynamics}
			~\text{ for }
			(i,j,t)\in\setK~\times&~\setN\times(\setT\backslash\{T\}),
			\label{eq:equality_constraint}\\
			x_{0,j,t}=\underline{\alpha}_{j,t}, 
			x_{K\!+\!1,j}=\overline{\alpha}_{j,t}
			&~\text{ for } j \in \setN,~t \in \setT,                  \label{eq:boundary_condition_i}\\   
			x_{i,0,t}=\underline{\beta}_{i,t},
			x_{i,N\!+\!1,t}=\overline{\beta}_{i,t}
			&~\text{ for } i\in\setK,~t\in\setT,
			\label{eq:boundary_condition_j}\\	
			x_{i,j,0} = \gamma_{i,j} 
			&~\text{ for } i \in \setK,~j \in\setN, 
			\label{eq:initial_condition}
		\end{align}
	\end{subequations}
	where $\ell_{i,j,t}(x,u) = x^\prime Q_{i,j,t} x + u^\prime R_{i,j,t} u$. For $i\in\setK$, $j\in\setN$ and
	$t\in\setT\backslash\{T\}$, it is assumed that
	$ Q_{i,j,t} = Q_{i,j,t}^{\prime} \succ 0$, and
	$R_{i,j,t} = R_{i,j,t}^{\prime}\succ 0$.
	Moreover, for every $i\in\setK$, $j\in\setN$, $Q_{i,j,T}\succ 0$, but $R_{i,j,T}=0$, so that $u_{i,j,T}$ can be removed as a decision variable. 
    
    Note that the problem \eqref{eq:LQ_problem} is a large-scale LQ optimal control problem with $O(KNT)$ decision variables. While the cost is separable across the sub-systems in both dimensions as well as the time horizon, the problem is not separable due the spatial coupling between the states of neighbouring sub-systems, and the inter-temporal coupling in the equality constraint \eqref{eq:equality_constraint}.
    %
    Solution of problem~\eqref{eq:LQ_problem} can obtained by solving the large-scale linear system of equations arising from the corresponding first-order optimality conditions known as KKT conditions.
    However, due to the structured coupling of variables (i.e., states) in the equality constraint \eqref{eq:equality_constraint}, this linear system of equations is also structured.

    \subsection{Structured Reformulation}\label{subsec:structured_reformulation}
        By defining $\bar{m}_j = \sum_{i=1}^{K}(m_{i,j})$, $\bar{n}_j = \sum_{i=1}^{K}(n_{i,j})$, $\bar{u}_{j,t} = \col(u_{1,j,t}, \ldots, u_{K,j,t}) \in
        \RR^{\bar{m}_j}$, and
        $\bar{x}_{j,t} = \col(x_{1,j,t}, \ldots, x_{K,j,T}) \in
        \RR^{\bar{n}_j}$,
        problem \eqref{eq:LQ_problem} can be
        reformulated as the following quadratic program:
        \begin{subequations}\label{eq:v_stacked_LQ_problem}
            \begin{equation}\label{eq:v_stacked_cost_function}
                \min_{\bar{x}_{j,t},~\bar{u}_{j,t}}
                \quad 
                \frac{1}{2}
                \sum_{t\in\setT}
                \sum_{j\in\setN}
                \bar{\ell}_{j,t}(            
                  \bar{x}_{j,t},
                  \bar{u}_{j,t})
            \end{equation}
            \text{subject to }
            \begin{align}
                \bar{x}_{j,t+1} &= \bar{A}_{j,t}\bar{x}_{j,t} + \bar{B}_{j,t}u_{j,t} + \bar{E}_{j,t}\bar{x}_{j-1,t} + \bar{F}_{j,t}\bar{x}_{j+1,t}\nonumber\\
                &+ \bar{M}_{j,t}{\boldsymbol{\alpha}}_{j,t},~j\in\setN,~t\in(\setT\backslash\{T\}), \label{eq:v_stacked_equality_constraint}\\
                \bar{x}_{0,t}&=\underline{\beta}_{t},~ \bar{x}_{N\!+\!1,t} =\overline{\beta}_{t},~ t\in\setT,
                \label{eq:v_stacked_boundary_condition}\\	
                \bar{x}_{j,0} &= \gamma_{j},~j \in\setN, 
                \label{eq:v_stacked_initial_condition}
            \end{align}
        \end{subequations}
        where $\bar{\ell}_{j,t}(\bar{x},\bar{u}) = \bar{x}^\prime \bar{Q}_{j,t} \bar{x} + \bar{u}^\prime \bar{R}_{j,t} \bar{u}$ for $j\in\setN$ and
        $t\in\setT$, $\bar{R}_{j,T} = 0$,
        \begin{align*}
            \bar{Q}_{j,t} &= \blkdiag(Q_{1,j,t}, \ldots, Q_{K,j,t}) 
                                    \in \RR^{\bar{n}_{j}\times\bar{n}_j},\\ 
            \bar{R}_{j,t} &= \blkdiag(R_{1,j,t}, \ldots, R_{K,j,t}) 
                                       \in \RR^{\bar{m}_{j}\times\bar{m}_j},\\ 
            \bar{B}_{j,t} &= \blkdiag(B_{1,j,t}, \ldots, B_{K,j,t})
                                       \in  \RR^{\bar{n}_{j}\times\bar{m}_j},\\ 
            \bar{E}_{j,t} &= \blkdiag(E_{1,j,t}, \ldots, E_{K,j,t}) 
                                       \in \RR^{\bar{n}_{j}\times\bar{n}_{j-1} },\\ 
            \bar{F}_{j,t} &= \blkdiag(F_{1,j,t}, \ldots, F_{K,j,t}) 
                                       \in \RR^{\bar{n}_{j}\times\bar{n}_{j+1} },\\ 
            \underline{\beta}_{t} &= \col(\underline{\beta}_{1,0,t},\ldots,
                                            \underline{\beta}_{K,0,t}) \in\RR^{\bar{n}_{0}},\\ 
            \overline{\beta}_{t} &= \col(\underline{\beta}_{1,N+1,t},\ldots,
                                            \underline{\beta}_{K,N+1,t}) \in\RR^{\bar{n}_{N+1}},\\ 
            {\boldsymbol{\alpha}}_{j,t} &= \col(\underline{\alpha}_{j,t},\overline{\alpha}_{j,t}) \in
                                 \RR^{n_{0,j}+n_{K+1,j}},\\ 
            \bar{A}_{j,t} &\!=\! 
            \renewcommand{\arraystretch}{1,2}
            \setlength{\arraycolsep}{0.1pt}
            \begin{bmatrix}
            A_{1,j,t}      &H_{1,j,t}       &          &\\
            G_{2,j,t} &A_{2,j,t}     &\ddots          &\\
            &\ddots &\ddots    &H_{K\!-\!1,j,t}\\
            & 		&G_{K,j,t} &A_{K,j,t}
            \end{bmatrix}, 
            \bar{M}_{j,t}\!=\! 
            \renewcommand{\arraystretch}{1.2}
            \setlength{\arraycolsep}{0.1pt}
            \left[
            \begin{array}{cc}
            G_{1,j,t} 	 &0\\
            0            &\vdots\\
            \vdots       &0\\
            0 		 	 &H_{K,j,t}
            \end{array}
            \right]
        \end{align*}%
        Note that $\bar{A}_{j,t} \in \RR^{\bar{n}_{j}\times\bar{n}_{j}}$ and $\bar{M}_{j,t} \in \RR^{\bar{n}_{j}\times(n_{0,j}+n_{K+1,j})}$. All the matrices in vertically stacked QP \eqref{eq:v_stacked_LQ_problem} are block diagonal except $\bar{A}_{j,t}$ which is block tri-diagonal due to the spatial coupling of system dynamics along the vertical direction in the optimal control problem \eqref{eq:LQ_problem}.
        
        Now, defining $\hat{m} = \sum_{i=1}^{N}(\bar{m}_{j})$, $\hat{n} = \sum_{i=1}^{N}(\bar{n}_{j})$, $\hat{u}_{t} = \col(u_{1,t}, \ldots, u_{N,t}) \in
        \RR^{\hat{m}}$, and $\hat{x}_{t} = \col(x_{1,t}, \ldots, x_{N,T}) \in
        \RR^{\hat{n}}$,
        problem \eqref{eq:v_stacked_LQ_problem} can be reformulated by stacking across the horizontal dimension of the 2D grid:
        \begin{subequations}\label{eq:h_stacked_LQ_problem}
            \begin{equation}\label{eq:h_stacked_cost_function}
                \min_{\hat{x}_{t},~\hat{u}_{t}}
                \quad 
                \frac{1}{2}\biggl( 
                \sum_{t\in\setT\backslash\{T\}}
                \begin{bmatrix}
    					\hat{x}_t\\
    					\hat{u}_t
    				\end{bmatrix}^{\prime}
    				\begin{bmatrix}
    					\hat{Q}_{t} & 0\\
    					0		   & \hat{R}_{t}
    				\end{bmatrix}
    				\begin{bmatrix}
    					\hat{x}_t\\
    					\hat{u}_t
    				\end{bmatrix}\biggr)
                + \hat{x}_{T}^\prime \hat{Q}_{T} \hat{x}_{T}
            \end{equation}
            \text{subject to }
            \begin{align}
                \hat{x}_{t+1} &= \hat{A}_{t}\hat{x}_{t} + \hat{B}_{t}\hat{u}_{t} + \hat{M}_{t}\hat{\boldsymbol{\alpha}}_{t}
                + \hat{N}_{t}\hat{\boldsymbol{\beta}}_{t},~t\in(\setT\backslash\{T\}), \label{eq:h_stacked_equality_constraint}\\
                \hat{x}_{0} &= \hat{\boldsymbol{\gamma}}, \label{eq:h_stacked_initial_condition}
            \end{align}
        \end{subequations}
        where
        \begin{align*}
            \hat{Q}_{t} &= \blkdiag(Q_{1,t}, \ldots, Q_{N,t}) 
                                    \in \RR^{\hat{n}\times\hat{n}},\\ 
            \hat{R}_{t} &= \blkdiag(R_{1,t}, \ldots, R_{N,t}) 
                                       \in \RR^{\hat{m}\times\hat{m}},\\ 
            \hat{B}_{t} &= \blkdiag(B_{1,t}, \ldots, B_{N,t})
                                       \in  \RR^{\hat{n}\times\hat{m}},\\ 
            \hat{M}_{t} &= \blkdiag(M_{1,t}, \ldots, M_{N,t}) 
                                       \in \RR^{\hat{n}\times\sum_{j\in\setN}(n_{0,j}+n_{K+1,j})},\\ 
            \hat{\boldsymbol{\alpha}}_{t} &= \col(\boldsymbol{\alpha}_{1,t},\boldsymbol{\alpha}_{N,t}) \in
                                        \RR^{\sum_{j\in\setN}(n_{0,j}+n_{K+1,j})},\\ 
            \hat{\boldsymbol{\beta}}_{t} &= \col(\underline{\beta}_{t},\ldots, \overline{\beta}_{t})
                                        \in\RR^{\bar{n}_{0}+\bar{n}_{N+1}},\\ 
            \hat{A}_{t} &= 
            \renewcommand{\arraystretch}{1,2}
            \setlength{\arraycolsep}{0.3pt}
            \begin{bmatrix}
            \bar{A}_{1,t}      &\bar{F}_{1,t}       &          &\\
            \bar{E}_{2,t} &\bar{A}_{2,t}     &\ddots          &\\
            &\ddots &\ddots    &\bar{F}_{N-1,t}\\
            & 		&\bar{E}_{N,t} &\bar{A}_{N,t}
            \end{bmatrix}, 
            \hat{N}_{t} = 
            \renewcommand{\arraystretch}{1.2}
            \setlength{\arraycolsep}{0.1pt}
            \left[
            \begin{array}{cc}
            \bar{E}_{1,t} 	 &0\\
            0            &\vdots\\
            \vdots       &0\\
            0 		 	 &\bar{F}_{N,t}
            \end{array}
            \right],
        \end{align*}
        and $\hat{N}_{t} \in \RR^{\hat{n}\times(\bar{n}_{0}+\bar{n}_{N+1})}$. Matrix $\hat{A}_{t} \in \RR^{\hat{n}\times\hat{n}}$ is a block tri-diagonal matrix of size $O(NK)$ and each of it's diagonal blocks are again block tri-diagonal of size $O(K)$.
        
        Finally, by defining $\tilde{u} = \col(\hat{u}_{0}, \ldots, \hat{u}_{T-1}) \in
        \RR^{\hat{m}T}$, and $\tilde{x} = \col(\hat{x}_{0}, \ldots, \hat{x}_{T}) \in
        \RR^{\hat{n}(T+1)}$,
        problem \eqref{eq:h_stacked_LQ_problem} can be reformulated by stacking across the temporal dimension of the problem:
        \begin{subequations}\label{eq:t_stacked_LQ_problem}
            \begin{equation}\label{eq:t_stacked_cost_function}
                \min_{\tilde{x},~\tilde{u}}
                \quad 
                \frac{1}{2}\biggl(
                \begin{bmatrix}
    					\tilde{x}\\
    					\tilde{u}
    				\end{bmatrix}^{\prime}
    				\begin{bmatrix}
    					\tilde{Q} & 0\\
    					0		   & \tilde{R}
    				\end{bmatrix}
    				\begin{bmatrix}
    					\tilde{x}\\
    					\tilde{u}
    				\end{bmatrix}\biggr)
            \end{equation}
            \text{subject to }
            \begin{equation}\label{eq:t_stacked_equality_constraint}
                \tilde{A}\tilde{x} + \tilde{B}\tilde{u} + \tilde{\omega} = 0 
            \end{equation}
        \end{subequations}
        where
        \begin{align*}
            \Tilde{Q} &= \blkdiag(\hat{Q}_{0}, \ldots, \hat{Q}_{T}) 
                                    \in \RR^{\hat{n}(T+1)\times\hat{n}(T+1)},\\ 
            \tilde{R} &= \blkdiag(\hat{R}_{0}, \ldots, \hat{R}_{T-1}) 
                                       \in \RR^{\hat{m}T\times\hat{m}T},\\ 
            \tilde{\omega} &= \col(0,\hat{\omega}_{0}, \ldots, \hat{\omega}_{T-1})
                                       \in  \RR^{\hat{n}(T+1)},\\ 
            \hat{\omega}_{0} &= \hat{M}_{t}\hat{\boldsymbol{\alpha}}_{t}
                                + \hat{N}_{t}\hat{\boldsymbol{\beta}}_{t} + \hat{\gamma} \in  \RR^{\hat{n}},\\
            \hat{\omega}_{t} &= \hat{M}_{t}\hat{\boldsymbol{\alpha}}_{t}
                                + \hat{N}_{t}\hat{\boldsymbol{\beta}}_{t}\in  \RR^{\hat{n}},~t\in\setT\backslash\{0,T\},\\
            \tilde{A} &= 
            \renewcommand{\arraystretch}{1,2}
            \setlength{\arraycolsep}{0.3pt}
            \begin{bmatrix}
            -I          &       &          &\\
            \hat{A}_{0} &-I     &          &\\
                        &\ddots &\ddots    &\\
                        & 		&\hat{A}_{T-1} &-I
            \end{bmatrix}, 
            \tilde{B} = 
            \renewcommand{\arraystretch}{1.2}
            \setlength{\arraycolsep}{0.1pt}
            \left[
            \begin{array}{ccc}
            0           &\hdots 	 &0\\
            \hat{B}_0   &\ddots      &\vdots\\
                        &\ddots      &0\\
                        & 		 	 &\hat{B}_{T-1}
            \end{array}
            \right],
        \end{align*}
        while $\tilde{B} \in \RR^{\hat{n}(T+1)\times\hat{m}T}$ and $\tilde{A}\in \RR^{\hat{n}(T+1)\times\hat{n}(T+1)}$ has a block bi-diagonal matrix due to the temporal coupling of variables in \eqref{eq:t_stacked_LQ_problem}.
        
    \subsection{KKT Conditions}\label{subsec:kkt_condtions}
        For problem \eqref{eq:h_stacked_LQ_problem}, the first-order optimality conditions, also known as KKT conditions, are given by
        \begin{subequations}\label{eq:kkt_conditions}
            \begin{align}
                \tilde{Q}\tilde{x} + \tilde{A}^{\prime}\tilde{\delta} &= 0,\\
                \tilde{R}\tilde{u} + \tilde{B}^{\prime}\tilde{\delta}  &= 0,\\
                \tilde{A}\tilde{x} + \tilde{B}\tilde{u} &= r_{\tilde{\delta}}.
            \end{align}
        \end{subequations}
        where $r_{\tilde{\delta}} = -\tilde{\omega}$.
        The variables $\tilde{\delta} = \col(\hat{\delta}_{0},\ldots,\hat{\delta}_{T}) \in \RR^{\hat{n}*(T+1)}$ are Lagrange multipliers. Since $Q$ and $R$ are positive definite, the problem \eqref{eq:t_stacked_LQ_problem} is strictly convex and the KKT conditions are necessary and sufficient for optimality \cite{Nocedal2000numerical}. Notice that, \eqref{eq:kkt_conditions} is a large-scale, sparse and structured linear system of equations whose size scales as $O(NKT)$.
        We can solve \eqref{eq:kkt_conditions} as
        \begin{subequations}
            \begin{align}
                \Delta \tilde{\delta} &= r_{\tilde{\delta}},\label{eq:block_tridiagonal_system}\\
                \tilde{x} &= \tilde{Q}^{-1}r_{\tilde{x}},\\
                \tilde{u} &= \tilde{R}^{-1}r_{\tilde{u}}.
            \end{align}
        \end{subequations}
        where $\Delta = (\tilde{A}\tilde{Q}^{-1}\tilde{A}^{\prime} + \tilde{B}\tilde{R}^{-1}\tilde{B}^{\prime})$, $r_{\tilde{x}} = (\tilde{A}^{\prime}\tilde{\delta})$ and $r_{\tilde{u}} = (\tilde{B}^{\prime}\tilde{\delta})$.
        
        Note that matrices $\tilde{Q} \succ 0$ and $\tilde{R} \succ 0$ are block diagonal with block size independent of $N,~K,$ and $T$. Therefore, computation of $\tilde{Q}^{-1}$ and $\tilde{R}^{-1}$ can be carried out efficiently. However, 
        solving \eqref{eq:block_tridiagonal_system} is computationally more expensive. Since $\tilde{Q}$ and $\tilde{R}$ are positive definite so their inverse are also positive definite, and $\tilde{A}$ is full row rank. Therefore, $\Delta$ is a positive definite and
        has a block tri-diagonal structure
        \begin{equation}\label{eq:structure_of_Delta}
			\Delta =
                    \renewcommand{\arraystretch}{1.2}
                    \setlength{\arraycolsep}{0.3pt}
                    \left[
    				\begin{array}{cccll}
    						\hat{\Psi}_0  &\hat{\Xi}_0^{\prime}   &                       &		             &\\
    						\hat{\Xi}_0   &\hat{\Psi}_1           &\hat{\Xi}_1            &		             &\\
    						               &\ddots                &\ddots	                &\ddots		       &\\
    						               &					 &\hat{\Xi}_{T-2}^{\prime}	&\hat{\Psi}_{T-1}	&\hat{\Xi}_{T-1}^{\prime}\\
    									   &					  &							&\hat{\Xi}_{T-1}    &\hat{\Psi}_T
					\end{array}
					\right] \in \RR^{\hat{n}(T+1) \times \hat{n}(T+1)},
		\end{equation}
        where 
		\begin{align*}
			\hat{\Psi}_t &\!=\!\hat{Q}_{t}^{-1}\!+\!\hat{A}_{t\!-\!1}\hat{Q}_{t\!-\!1}^{-1}\hat{A}_{t\!-\!1}^{\prime}\!+\!\hat{B}_{t\!-\!1}\hat{R}_{t\!-\!1}^{-1}\hat{B}_{t\!-\!1}^{\prime}~t\in\setT\backslash\{0\},\\
			\hat{\Xi}_{t} &\!=\! -\hat{A}_t\hat{Q}_{t}^{-1} \in \RR^{\hat{n} \times \hat{n}},~ t\in\setT\backslash\{T\},
		\end{align*}
        and $\hat{\Psi}_0 = \hat{Q}_{0}^{-1} \in \RR^{\hat{n} \times \hat{n}}$. 
        Note that each $\hat{\Psi}_t\in \RR^{\hat{n} \times \hat{n}},~t\in\setT\backslash \{0\}$ has a block penta-diagonal structure because $\hat{A}_t$ are block tri-diagonal of size $O(NK)$. Let $\setV = \{1,2,\ldots,V\}$, with $V = N/2$ when $N$ is even and $V = (N+1)/2$ otherwise. Also define,
        \begin{subequations}
    		\label{eq:constructing_Phi_Omega_matrices} 
    		\begin{align}\label{eq:structure_Phi_2x2_a}
    			\bar{\Phi}_{v,t} &= 
    			\begin{bmatrix}
    				\bar{Z}_{2v - 1,t} &\bar{Y}_{2v,t}^{\prime}\\
    				\bar{Y}_{2v,t}	   &\bar{Z}_{2v,t}
    			\end{bmatrix},~ v \in \mathcal{V}\backslash\{V\},\\ 
    			\bar{\Phi}_{V,t} &=	
    			\begin{cases}
    				\begin{bmatrix}
    					\bar{Z}_{N-1,t} &\bar{Y}_{N,t}^{\prime}\\
    					\bar{Y}_{N,t}	 &\bar{Z}_{N,t}
    				\end{bmatrix}, & \text{$V$ even},\\
    				\bar{Z}_{N,t}, & \text{$V$ odd},
    			\end{cases}\\
    			\bar{\Omega}_{v,t} &= 
    			\begin{bmatrix}
    				\bar{W}_{2v - 1,t} &\bar{Y}_{2v - 1,t}\\
    				0		     &\bar{W}_{2v}
    			\end{bmatrix},~ v \in
    			\mathcal{V}\backslash\{1,V\}, \\
    			\bar{\Omega}_{V,t} &=	
    			\begin{cases}
    				\begin{bmatrix}
    					\bar{W}_{N-1,t} &\bar{Y}_{N-1,t}\\
    					0		 &\bar{W}_{N,t}
    				\end{bmatrix}, & \text{$N$ even},\\ 
    				\begin{bmatrix}
    					\bar{V}_{N,t} &\bar{Y}_{N,t}
    				\end{bmatrix}, & \text{$N$ odd},
    			\end{cases}
    		\end{align}
    	\end{subequations}
        where 
    	\begin{subequations}\label{eq:blocks_of_Psi_defined}
    		\begin{align*}
    			\bar{Z}_{j,t} &\!=\!  \bar{A}_{j,t}\bar{Q}_{j,t}^{-1}\bar{A}_{j,t}^{\prime}\!+\! \bar{E}_{j,t}\bar{Q}_{j-1,t}^{-1}\bar{E}_{j,t}^{\prime}\!+\! \bar{F}_{j,t}\bar{Q}_{j+1,t}^{-1}\bar{F}_{j,t}^{\prime},\\
    			\bar{Y}_{j,t} &\!=\! \bar{E}_{j,t}\bar{Q}_{j-1,t}^{-1}\bar{A}_{j-1,t}^{\prime}\!+\! \bar{A}_{j,t}\bar{Q}_{j,t}^{-1}\bar{F}_{j-1,t}^{\prime},\\
    			\bar{V}_{j,t} &\!=\! \bar{E}_{j,t}\bar{Q}_{j-1,t}^{-1}\bar{F}_{j-2,t}^{\prime},
    		\end{align*}
    	\end{subequations}
        for $j\in\setN$, $t\in\setT\backslash\{T\}$, with $\bar{Q}_{0,t} = 0$, $\bar{Q}_{N+1,t} = 0$ and $\bar{F}_{0,t} = 0$. Given this, $\hat{\Psi}_{t}$ for $t\in\setT\backslash\{T\}$ becomes a block tri-diagonal matrix given by
        \begin{equation}\label{eq:structure_of_Psi_t}
			\hat{\Psi}_{t} = \hat{\Phi}_{t} - \hat{\Omega}_{t} = 
                    \renewcommand{\arraystretch}{1.2}
                    \setlength{\arraycolsep}{0.3pt}
                    \left[
    				\begin{array}{cccll}
    						\bar{\Phi}_{1,t}  &\bar{\Omega}_{2,t}^{\prime}   &                       &		             &\\
    						\bar{\Omega}_{2,t}   &\bar{\Phi}_{2,t}           &\bar{\Omega}_{3,t}            &		             &\\
    						               &\ddots                &\ddots	                &\ddots		       &\\
    						               &					 &\bar{\Omega}_{V-1,t}^{\prime}	&\bar{\Phi}_{V-1,t}	&\bar{\Omega}_{V,t}^{\prime}\\
    									   &					  &							&\bar{\Omega}_{V,t}    &\bar{\Phi}_{V,t}
					\end{array}
					\right],
		\end{equation}
        where $\hat{\Phi}_{t} = \blkdiag(\bar{\Phi}_{1,t},\ldots,\bar{\Phi}_{V,t})$ and $\hat{\Omega}_{t} = \hat{\Phi}_{t} - \hat{\Psi}_{t}$. This structured splitting of $\hat{\Psi}$ will be needed in the next section.
        
        In the next section, we propose an iterative algorithm based on preconditioned conjugate gradient method (PCGM) to solve \eqref{eq:block_tridiagonal_system}.
        The main contribution of this paper is the development of a structured preconditioner based on nested block Jacobi iterations whose properties are detailed in Section~\ref{sec:nested_jacobi_preconditioner}.
                

\section{Preconditioned Conjugate Gradient Method}\label{sec:pcgm}
    The positive definite linear system of equations \eqref{eq:block_tridiagonal_system} can be solved using conjugate gradient method (CGM) \cite{Hestenes1952methods}. Let $e^{(k)} = \tilde{\delta}^{(k)} - \tilde{\delta}^{*}$ be the error between $k$-th iterate $\tilde{\delta}^{(k)}$ of the CGM and the exact solution $\tilde{\delta}^*$ of \eqref{eq:block_tridiagonal_system}. It can be shown that $e^{(k)}$ satisfies the
	following~\cite[Thm.~6.29]{Saad2003iterative}:  
	\begin{equation}\label{eq:convergence_rate_CG_method}
		\|e^{(k)}\|_{\Psi} \leq
		2\left((\sqrt{\kappa(\Delta)}
		-1)\big/ (\sqrt{\kappa(\Delta)} +
		1)\right)^{i}\|e^{(0)}\|_{\Delta}, 
	\end{equation}
	where $\|e\|_{\Delta} = e^{\prime}\Delta e$,
	$\kappa(\Delta) = \lambda_{\mathrm{max}}(\Delta)\big/
	\lambda_{\mathrm{min}}(\Delta)$ is the condition number and $\lambda_{\mathrm{max}}(\Delta)$, $\lambda_{\mathrm{min}}(\Delta)$) are the maximum and minimum eigenvalue of $\Delta$, respectively. It shows that the CGM converges faster if $\kappa(\Delta)$ is closer to $1$. Therefore, to improve the performance of CGM, the system of equations \eqref{eq:block_tridiagonal_system} is transformed such that the condition number of the transformed coefficient matrix is improved. The transformed system of equations is known as preconditioned system. Let $P = P^{\prime}$ be a positive definite matrix, then solving \eqref{eq:block_tridiagonal_system} is equivalent to solving the transformed system
    \begin{equation}\label{eq:pcg_system}
		P^{-1/2}\Delta P^{-1/2} \tilde{\gamma} = r_{\tilde{\gamma}}, 
	\end{equation}
	where $\tilde{\gamma} = P^{1/2}\tilde{\delta}$ and $r_{\tilde{\gamma}} = P^{-1/2}r_{\tilde{\delta}}$. An efficient implementation of preconditioned CGM (PCGM) is given in Algorithm~\ref{alg:pcg} from~\cite{Hackbusch2016iterative}. 
    \begin{algorithm}[t]
		\caption{PCGM~\cite{Hackbusch2016iterative}
			for
			\eqref{eq:block_tridiagonal_system}
			with preconditioner
			$P$.}  
		\label{alg:pcg}
		\begin{algorithmic}[1] 
			\State \textbf{initialize}
			$\tilde{\delta}^{(0)}$, $\epsilon$,
			$\text{iter}_{\text{max}}$ 
			\State $r^{(0)} = r_{\tilde{\theta}} - \Delta
			\tilde{\delta}^{(0)}$ 
			\State \textbf{solve} $P d^{(0)} =
			r^{(0)}$ \label{step:precond0}
			\State $\mu^{(0)} = 
			(d^{(0)})^\prime r^{(0)} $  
			\State $i = 0$ \label{step:initialize}
			\While {$i < \text{iter}_{\text{max}}$}
			\State $y^{(i)} = \Delta
			{d}^{(i)}$ \label{step:matrix_vector_product} 
			\State $\vartheta^{(i)} =
			{\mu^{(i)}}/{(
				(y^{(i)})^\prime d^{(i)})
			}$ \label{step:alpha_update} 
			\State $\tilde{\delta}^{(i+1)} =
			\tilde{\delta}^{(i)} 
			+ \vartheta^{(i)}
			d^{(i)}$\label{step:variable_update} 
			\State $r^{(i+1)} = r^{(i)} -
			\vartheta^{(i)}
			y^{(i)}$\label{step:residual_update} 
			\State \textbf{if $\|r^{(i+1)}
				\|_{\infty} < \epsilon $
				exit}  \label{step:error_update} 
			\State \textbf{solve} $P q^{(i+1)} =
			r^{(i+1)}$ \label{step:preconditioning} 
			\State $\mu^{(i+1)} = 
			(q^{(i+1)})^\prime r^{(i+1)}
			$  \label{step:beta_update} 
			\State $d^{(i+1)} = r^{(i+1)} +
			\left({\mu^{(i+1)}}/{\mu^{(i)}}\right)
			d^{(i)}$ \label{step:direction_update} 
			\State $i = i+1$
			\EndWhile
		\end{algorithmic}
	\end{algorithm}
    Note that lines~\ref{step:matrix_vector_product} to~\ref{step:direction_update} constitute one PCG step. The transformed system $P^{-1/2}\Delta P^{-1/2}$ is never formed explicitly. Instead, the preconditioning is performed by solving a linear system of equations at lines~\ref{step:precond0} and~\ref{step:preconditioning} of Algorithm~\ref{alg:pcg}. Choice of preconditioner $P$ determines the overall complexity of PCGM. 
    Ideally, if we use the preconditioner $P = \Delta$, this will result in $P^{-1/2}\Delta P^{-1/2} = I$. However, the linear system to be solved at the preconditioning lines~\ref{step:precond0} and~\ref{step:preconditioning} becomes the original problem. The incomplete sparse LU factorization of the coefficient matrix $\Delta$ can be used as a preconditioner \cite{Benzi2003robust, Xia2017effective}.  However, to ensure that the resulting preconditioner is both positive definite and effective, it may be necessary to use incomplete LU factors that are denser (i.e., have less structure) than $\Delta$. 
    
    In the next section, we devise a structured preconditioning approach based on nested block Jacobi iterations. We apply a fixed number of inner iterations at each outer iteration of block Jacobi method to approximately implement lines~\ref{step:precond0} and~\ref{step:preconditioning} with $P = \Delta$. The proposed approach is inspired by the ideas from \cite{Nichols1973convergence, Johnson1983polynomial, Lanzkron1990convergence, Migallon1997convergence, Cao1999two, Cao2000convergence}. It extends the developments presented in \cite{Zafar2022structured} where same ideas have already been explored
    within the context of 1D chains.
    
\section{Nested Block Jacobi Preconditioner}\label{sec:nested_jacobi_preconditioner}
    Let $\Delta = \tilde{\Psi} - \tilde{\Xi} \succ 0$ is a matrix splitting such that
    \begin{subequations}\label{eq:splitting_Delta}
        \begin{align}
            \tilde{\Psi} &= \blkdiag(\hat{\Psi}_{0}, \ldots, \hat{\Psi}_{T}) \succ 0,\\
            \tilde{\Xi} &= \tilde{\Psi} - \Delta.
        \end{align}
    \end{subequations}
    Then, the block Jacobi method for solving \eqref{eq:block_tridiagonal_system} involves the following:
    \begin{equation}\label{eq:outer_jacobi_iterations}
        \tilde{\Psi}\tilde{\delta}^{(s+1)} = r_{\tilde{\delta}} + \Xi\tilde{\delta}^{(s)},
    \end{equation}
    where $s$ is the number of block Jacobi iterations. These iterations converge
    to the solution of \eqref{eq:block_tridiagonal_system} if and only if
    \begin{equation}\label{eq:convergence_condition}
          \rho(\Psi^{-1}\Xi) < 1,
    \end{equation}
    where $\rho(\cdot)$ denotes spectral radius~\cite[Thm. 2.16]{Hackbusch2016iterative}. Given that $\Delta$ is a positive definite block tri-diagonal matrix, and the splitting \eqref{eq:splitting_Delta}, condition \eqref{eq:convergence_condition} always holds when \eqref{eq:outer_jacobi_iterations} is solved exactly~\cite[Lem. 4.7 with Thm. 4.18]{Hackbusch2016iterative}. However, instead of solving \eqref{eq:outer_jacobi_iterations} exactly, we propose a fixed number of inner iterations. It is apparent from \eqref{eq:structure_of_Psi_t}, that $\tilde{\Psi} \succ 0$ has a block tri-diagonal structure with $\hat{\Omega}_{1,t} = 0$ and $\hat{\Omega}_{V+1,t} = 0$ for $t \in \setT$. Let $\tilde{\Psi} = \tilde{\Phi} - \tilde{\Omega} \succ 0$ such that 
    \begin{subequations}\label{eq:splitting_Psi}
        \begin{align}
            \tilde{\Phi} &= \blkdiag(\hat{\Phi}_{0}, \ldots, \hat{\Phi}_{T}) \succ 0,\\
            \tilde{\Omega} &= \tilde{\Phi} - \tilde{\Psi}.
        \end{align}
    \end{subequations}
    Then for each outer iteration $s$, we propose $L$ number of inner iterations. This results in a nested block Jacobi method (NBJM) given in Algorithm~\ref{alg:NBJM}.
    %
    %
    \begin{algorithm}[t]
		\caption{NBJM for \eqref{eq:block_tridiagonal_system}}  
		\label{alg:NBJM}
		\begin{algorithmic}[1] 
			\State \textbf{initialize}
			$\tilde{\delta}^{(0)} = 0$
            \For {$s = 0,\ldots,S-1$}
                \State \textbf{set } $r_{\tilde{\theta}}^{(s)} = r_{\tilde{\delta}} + \Xi\tilde{\delta}^{(s)}$ \textbf{ and initialize } $\tilde{\theta}^{(0)} = 0$ ,
                \For {$l = 0,\ldots,L-1$}\label{stepNBJM:matrix_vector_product}
                    \State \textbf{solve} $\tilde{\Phi} \tilde{\theta}^{(l+1)} = r_{\tilde{\theta}}^{(s)} + \tilde{\Omega}\tilde{\theta}^{(l)}$ \label{stepNBJM:inner_iteration}
                \EndFor
                \State \textbf{if $\|\tilde{\delta}^{(s+1)} - \tilde{\delta}^{(s)}\|_{\infty} < \epsilon$ exit}
            \EndFor
		\end{algorithmic}
	\end{algorithm}
    %
    Note that for the proposed splitting \eqref{eq:splitting_Psi} of the positive definite block tri-diagonal matrix $\tilde{\Psi}$, condition $\rho(\tilde{\Phi}^{-1}\tilde{\Omega}) < 1$ always holds~\cite[Lem. 4.7 with Thm. 4.18]{Hackbusch2016iterative}. 
    %
    %
    %
    \subsection{Positive-definiteness of Preconditioner}
        We propose to apply fixed number of inner-outer iterations of NBJM (i.e., Algorithm~\ref{alg:NBJM}), with $P = \Delta$ at lines~\ref{step:precond0} and~\ref{step:preconditioning} in Algorithm~\ref{alg:pcg}. Executing this is equivalent to the use of a positive definite preconditioner. First, we present the following technical lemma which leads to the main result.
        \begin{lemma}\label{lem:inverse_of_Psi}
            Given that for the proposed splitting \eqref{eq:splitting_Psi} of the positive definite block tri-diagonal matrix $\tilde{\Psi}$, condition $\rho(\tilde{\Phi}^{-1}\tilde{\Omega}) < 1$ always holds. Let $L$ is a positive even integer i.e., $L \in \{2,4,\ldots\}$, the inverse of $\tilde{\Psi}$ can be expressed as
            \begin{equation}\label{eq:inverse_of_Psi}
                \tilde{\Psi}^{-1} = \tilde{\Upsilon}_{L} + \tilde{\Pi}_{L},
            \end{equation}
            where $\tilde{\Upsilon}_{L} = \sum_{l=0}^{L-1}(\tilde{\Phi}^{-1}\tilde{\Omega})^{l}\tilde{\Phi}^{-1} \succ 0$ and $\tilde{\Pi}_{L} = \sum_{l=L}^{\infty}(\tilde{\Phi}^{-1}\tilde{\Omega})^{l}\tilde{\Phi}^{-1} \succeq 0$.
        \end{lemma}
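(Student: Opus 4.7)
The overall strategy is to first derive the identity~\eqref{eq:inverse_of_Psi} as a Neumann series expansion, and then verify the definiteness claims by symmetrizing the summands via a congruence transformation.

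I would begin by factoring $\tilde{\Psi} = \tilde{\Phi}(I - \tilde{\Phi}^{-1}\tilde{\Omega})$, which is valid since $\tilde{\Phi} \succ 0$ is invertible. Because $\rho(\tilde{\Phi}^{-1}\tilde{\Omega}) < 1$ by hypothesis, the geometric series $\sum_{l=0}^{\infty}(\tilde{\Phi}^{-1}\tilde{\Omega})^l$ converges to $(I - \tilde{\Phi}^{-1}\tilde{\Omega})^{-1}$, so
\[
\tilde{\Psi}^{-1} = (I - \tilde{\Phi}^{-1}\tilde{\Omega})^{-1}\tilde{\Phi}^{-1} = \sum_{l=0}^{\infty}(\tilde{\Phi}^{-1}\tilde{\Omega})^l \tilde{\Phi}^{-1}.
\]
Splitting the sum at index $L$ then yields $\tilde{\Psi}^{-1} = \tilde{\Upsilon}_L + \tilde{\Pi}_L$ as desired.

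The main obstacle is establishing the definiteness assertions, because each summand $(\tilde{\Phi}^{-1}\tilde{\Omega})^l\tilde{\Phi}^{-1}$ is not manifestly symmetric. To handle this, I would introduce the symmetric auxiliary matrix $M := \tilde{\Phi}^{-1/2}\tilde{\Omega}\tilde{\Phi}^{-1/2}$; here I use that $\tilde{\Omega} = \tilde{\Phi} - \tilde{\Psi}$ is symmetric, since both $\tilde{\Phi}$ and $\tilde{\Psi}$ are. The key algebraic identity is that $\tilde{\Phi}^{-1}\tilde{\Omega} = \tilde{\Phi}^{-1/2} M \tilde{\Phi}^{1/2}$, so a straightforward induction gives $(\tilde{\Phi}^{-1}\tilde{\Omega})^l \tilde{\Phi}^{-1} = \tilde{\Phi}^{-1/2} M^l \tilde{\Phi}^{-1/2}$. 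This allows me to rewrite both partial sums as
\[
\tilde{\Upsilon}_L = \tilde{\Phi}^{-1/2}\Bigl(\sum_{l=0}^{L-1} M^l\Bigr)\tilde{\Phi}^{-1/2}, \qquad \tilde{\Pi}_L = \tilde{\Phi}^{-1/2}\Bigl(\sum_{l=L}^{\infty} M^l\Bigr)\tilde{\Phi}^{-1/2},
\]
so by congruence with the positive-definite matrix $\tilde{\Phi}^{-1/2}$, the definiteness of each side reduces to that of the symmetric inner sum.

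Finally, I would invoke the spectral theorem for $M$. Similarity preserves spectrum, so each eigenvalue $\lambda$ of $M$ satisfies $|\lambda| < 1$. For such $\lambda$, the closed forms $\sum_{l=0}^{L-1}\lambda^l = (1-\lambda^L)/(1-\lambda)$ and $\sum_{l=L}^{\infty}\lambda^l = \lambda^L/(1-\lambda)$ are strictly positive and non-negative respectively, precisely because $L$ being even forces $\lambda^L \in [0,1)$ regardless of the sign of $\lambda$. Applying the functional calculus to $M$ then yields $\sum_{l=0}^{L-1} M^l \succ 0$ and $\sum_{l=L}^{\infty} M^l \succeq 0$, which, combined with the congruence above, gives $\tilde{\Upsilon}_L \succ 0$ and $\tilde{\Pi}_L \succeq 0$ and completes the argument. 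The parity hypothesis on $L$ enters in an essential way in this last step and cannot be dropped without further assumptions on the sign of the spectrum of $M$.
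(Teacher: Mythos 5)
Your proposal is correct, but it reaches the definiteness claims by a genuinely different route than the paper. The paper never symmetrizes with $\tilde{\Phi}^{-1/2}$; instead it pairs consecutive terms of the series, writing
$(\tilde{\Phi}^{-1}\tilde{\Omega})^{2l}\tilde{\Phi}^{-1}+(\tilde{\Phi}^{-1}\tilde{\Omega})^{2l+1}\tilde{\Phi}^{-1}
=(\tilde{\Phi}^{-1}\tilde{\Omega})^{l}\,\tilde{\Phi}^{-1}(\tilde{\Phi}+\tilde{\Omega})\tilde{\Phi}^{-1}\,(\tilde{\Omega}\tilde{\Phi}^{-1})^{l}$,
so that every paired summand is a congruence of $\tilde{\Phi}+\tilde{\Omega}$; it then establishes $\tilde{\Phi}+\tilde{\Omega}\succ 0$ structurally, via the sign-flipping similarity $\tilde{\Phi}+\tilde{\Omega}=\Lambda^{\prime}\tilde{\Psi}\Lambda$ with $\Lambda=\blkdiag((-I)^{T+1},\ldots,(-I)^{1})$, which exploits the block tri-diagonal form of $\tilde{\Psi}$. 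The evenness of $L$ enters there as the requirement that $\tilde{\Upsilon}_L$ contain a whole number of such pairs, whereas in your argument it enters through $\lambda^{L}\ge 0$. Your route buys generality and self-containment: it needs only $\tilde{\Phi}\succ 0$, symmetry of $\tilde{\Omega}$, and $\rho(\tilde{\Phi}^{-1}\tilde{\Omega})<1$, with no appeal to the tri-diagonal structure or to $\tilde{\Phi}+\tilde{\Omega}\succ 0$, and it also makes the Neumann-series derivation of \eqref{eq:inverse_of_Psi} explicit where the paper leaves it implicit; as you note, it additionally reveals that evenness of $L$ is only essential for $\tilde{\Pi}_L\succeq 0$, not for $\tilde{\Upsilon}_L\succ 0$. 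The paper's route buys reusability: the fact $\tilde{\Phi}+\tilde{\Omega}=\Lambda^{\prime}\tilde{\Psi}\Lambda\succ 0$ (and its analogue $\tilde{\Psi}+\tilde{\Xi}\succ 0$) is invoked again in Remark~\ref{cor:inverse_of_Psi} and in the proof of Theorem~\ref{thm:posdef_preconditioner}, where the same pairing argument is repeated for $\varDelta_{2S}$ and $\varDelta_{2S+1}$, and it avoids matrix square roots and the spectral calculus entirely.
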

        \begin{proof}
            First note that for the proposed splitting \eqref{eq:splitting_Psi} of $\tilde{\Psi} = \tilde{\Phi} - \tilde{\Omega} \succ 0$, with $\Lambda = \blkdiag(\Lambda_{T+1},\ldots,\Lambda_1)$ and $\Lambda_t = (-I)^{t}$ for $t = 1,\ldots,T+1$, it holds that $\tilde{\Phi} + \tilde{\Omega} = \Lambda^{\prime}\tilde{\Psi}\Lambda \succ 0$. Then using $(\tilde{\Phi}^{-1}\tilde{\Omega})^{l}\tilde{\Phi}^{-1} = \tilde{\Phi}^{-1}(\tilde{\Omega}\tilde{\Phi}^{-1})^{l}$, note that
            \begin{align*}
                \tilde{\Upsilon}_{L} &= \sum_{l=0}^{(L/2)-1}(\tilde{\Phi}^{-1}\tilde{\Omega})^{l}\tilde{\Phi}^{-1}(\tilde{\Phi} + \tilde{\Omega})\tilde{\Phi}^{-1}(\tilde{\Omega}\tilde{\Phi}^{-1})^{l}\\
                &= \sum_{l=1}^{(L/2)-1}(\tilde{\Phi}^{-1}\tilde{\Omega})^{l}\tilde{\Phi}^{-1}(\tilde{\Phi} + \tilde{\Omega})\tilde{\Phi}^{-1}(\tilde{\Omega}\tilde{\Phi}^{-1})^{l}\\
                &+ \tilde{\Phi}^{-1}(\tilde{\Phi} + \tilde{\Omega})\tilde{\Phi}^{-1},
            \end{align*}
            and
            \begin{align*}
                \tilde{\Pi}_{L} &= \sum_{l=(L/2)}^{\infty}(\tilde{\Phi}^{-1}\tilde{\Omega})^{l}\tilde{\Phi}^{-1}(\tilde{\Phi} + \tilde{\Omega})\tilde{\Phi}^{-1}(\tilde{\Omega}\tilde{\Phi}^{-1})^{l},
            \end{align*}
            for $L \in \{2,4,\ldots\}$. Note that $\tilde{\Phi}^{-1}(\tilde{\Phi} + \tilde{\Omega})\tilde{\Phi}^{-1} \succ 0$ and $(\tilde{\Phi}^{-1}\tilde{\Omega})^{l}\tilde{\Phi}^{-1}(\tilde{\Phi} + \tilde{\Omega})\tilde{\Phi}^{-1}(\tilde{\Omega}\tilde{\Phi}^{-1})^{l} \succeq 0$ for all $l\in \NN$. Therefore, $\tilde{\Upsilon}_{L} \succ 0$ and $\tilde{\Pi}_{L} \succeq 0$ as claimed.
        \end{proof}
        \begin{remark}\label{cor:inverse_of_Psi}
            Note that from Lem.~\ref{lem:inverse_of_Psi}, $\tilde{\Psi}^{-1} - \tilde{\Upsilon}_L = \tilde{\Pi}_{L} \succeq 0$ which implies that $\tilde{\Upsilon}_L^{-1} - \tilde{\Psi} \succeq 0$ \cite[Cor. 7.7.4]{Horn2013matrix}.
            Also note that for the proposed splitting \eqref{eq:splitting_Delta} of $\Delta \succ 0$ in \eqref{eq:structure_of_Delta}, with $\Lambda = \blkdiag(\Lambda_{T+1},\ldots,\Lambda_1)$ and $\Lambda_t = (-I)^{t}$ for $t = 1,\ldots,T+1$, it holds that $\tilde{\Psi} + \tilde{\Xi} = \Lambda^{\prime}\tilde{\Psi}\Lambda \succ 0$. 
            Therefore, $(\tilde{\Upsilon}_L^{-1} + \tilde{\Xi}) - (\tilde{\Psi} + \tilde{\Xi}) \succeq 0 \implies (\tilde{\Upsilon}_L^{-1} + \tilde{\Xi}) \succeq (\tilde{\Psi} + \tilde{\Xi}) \succ 0$.
        \end{remark}
        Now we present the main result.
        \begin{theorem}\label{thm:posdef_preconditioner}
            Given $S \in \NN$ and $L \in \{2,4,\ldots\}$, the $S^{\mathrm{th}}$ iterate of Algorithm~\ref{alg:NBJM} satisfies $P_{S} \tilde{\delta}^{S} = r_{\tilde{\delta}}$ with $P_{S} = \varDelta_{S}^{-1} \succ 0$, where $\varDelta_{S} = \sum_{s=0}^{S-1}(\tilde{\Upsilon}_{L}\tilde{\Xi})^{s}\tilde{\Upsilon}_{L}$ and $\tilde{\Upsilon}_{L} = \sum_{l=0}^{L-1}(\tilde{\Phi}^{-1}\tilde{\Omega})^{l}\tilde{\Phi}^{-1}$.
        \end{theorem}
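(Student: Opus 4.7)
The plan is to first convert Algorithm~\ref{alg:NBJM} into a closed-form linear map from $r_{\tilde{\delta}}$ to $\tilde{\delta}^{(S)}$, and then establish positive definiteness of that map (which will be identified with $\varDelta_S$) by a spectral analysis enabled by Lemma~\ref{lem:inverse_of_Psi} and Remark~\ref{cor:inverse_of_Psi}.

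First I would unroll the algorithm. For a given outer step $s$, the inner iteration is the linear recursion $\tilde{\theta}^{(l+1)} = \tilde{\Phi}^{-1}\tilde{\Omega}\tilde{\theta}^{(l)} + \tilde{\Phi}^{-1}r_{\tilde{\theta}}^{(s)}$ with $\tilde{\theta}^{(0)} = 0$, and a short induction on $l$ gives $\tilde{\theta}^{(L)} = \sum_{l=0}^{L-1}(\tilde{\Phi}^{-1}\tilde{\Omega})^{l}\tilde{\Phi}^{-1}r_{\tilde{\theta}}^{(s)} = \tilde{\Upsilon}_{L}r_{\tilde{\theta}}^{(s)}$. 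Interpreting $\tilde{\delta}^{(s+1)} := \tilde{\theta}^{(L)}$ and substituting $r_{\tilde{\theta}}^{(s)} = r_{\tilde{\delta}} + \tilde{\Xi}\tilde{\delta}^{(s)}$ then yields the stationary outer recursion $\tilde{\delta}^{(s+1)} = \tilde{\Upsilon}_L\tilde{\Xi}\tilde{\delta}^{(s)} + \tilde{\Upsilon}_L r_{\tilde{\delta}}$. Starting from $\tilde{\delta}^{(0)} = 0$ and telescoping over $S$ outer steps gives $\tilde{\delta}^{(S)} = \varDelta_S r_{\tilde{\delta}}$, which is the stated identity once $\varDelta_S$ is shown to be invertible.

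Next, to prove $\varDelta_S \succ 0$ I would use the identity $(\tilde{\Upsilon}_L\tilde{\Xi})^{s}\tilde{\Upsilon}_L = \tilde{\Upsilon}_L(\tilde{\Xi}\tilde{\Upsilon}_L)^{s}$ to observe that each summand, hence $\varDelta_S$ itself, is symmetric. With the positive-definite square root $\tilde{\Upsilon}_L^{1/2}$ available from Lemma~\ref{lem:inverse_of_Psi}, I would define the symmetric matrix $J := \tilde{\Upsilon}_L^{1/2}\tilde{\Xi}\tilde{\Upsilon}_L^{1/2}$ and factor $\varDelta_S = \tilde{\Upsilon}_L^{1/2}\bigl(\sum_{s=0}^{S-1}J^{s}\bigr)\tilde{\Upsilon}_L^{1/2}$. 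It then suffices to show $\sum_{s=0}^{S-1}J^{s} \succ 0$, which by the spectral theorem reduces to the scalar claim $\sum_{s=0}^{S-1}\lambda^{s} > 0$ for every eigenvalue $\lambda$ of $J$.

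The main obstacle is establishing the spectral bound $|\lambda|<1$, equivalently $I \pm J \succ 0$. Congruence by $\tilde{\Upsilon}_L^{1/2}$ translates this into $\tilde{\Upsilon}_L^{-1} \pm \tilde{\Xi} \succ 0$: the ``$+$'' direction follows immediately from Remark~\ref{cor:inverse_of_Psi}, while for the ``$-$'' direction I would decompose $\tilde{\Upsilon}_L^{-1} - \tilde{\Xi} = (\tilde{\Upsilon}_L^{-1} - \tilde{\Psi}) + \Delta$, where the first summand is positive semi-definite by Remark~\ref{cor:inverse_of_Psi} and $\Delta \succ 0$ by assumption. With $\lambda \in (-1,1)$, both numerator and denominator of $(1-\lambda^S)/(1-\lambda)$ carry the same sign for any $S \in \NN$, so $\sum_{s=0}^{S-1}\lambda^s > 0$; hence $\sum_{s=0}^{S-1}J^s \succ 0$, and conjugation by the positive-definite $\tilde{\Upsilon}_L^{1/2}$ preserves definiteness to give $\varDelta_S \succ 0$, whence $P_S = \varDelta_S^{-1} \succ 0$ as required.
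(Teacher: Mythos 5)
Your proposal is correct, and the first half (unrolling the inner loop to $\tilde{\theta}^{(L)} = \tilde{\Upsilon}_L r_{\tilde{\theta}}^{(s)}$ and telescoping the outer recursion to $\tilde{\delta}^{(S)} = \varDelta_S r_{\tilde{\delta}}$) is essentially identical to the paper's. Where you genuinely diverge is in proving $\varDelta_S \succ 0$. The paper avoids any spectral argument: using $(\tilde{\Upsilon}_L\tilde{\Xi})^s\tilde{\Upsilon}_L = \tilde{\Upsilon}_L(\tilde{\Xi}\tilde{\Upsilon}_L)^s$ it regroups the sum pairwise, writing $\varDelta_{2S} = \sum_{s=0}^{S-1}(\tilde{\Upsilon}_L\tilde{\Xi})^s\,\tilde{\Upsilon}_L(\tilde{\Upsilon}_L^{-1}+\tilde{\Xi})\tilde{\Upsilon}_L\,(\tilde{\Xi}\tilde{\Upsilon}_L)^s$ and $\varDelta_{2S+1} = \varDelta_{2S} + (\tilde{\Upsilon}_L\tilde{\Xi})^{S}\tilde{\Upsilon}_L((\tilde{\Upsilon}_L\tilde{\Xi})^{S})^{\prime}$, so that each term is a congruence of $\tilde{\Upsilon}_L^{-1}+\tilde{\Xi}\succ 0$ (the $s=0$ term being nonsingular), exactly mirroring the pairing trick in the proof of Lemma~\ref{lem:inverse_of_Psi}; only the one-sided bound from Remark~\ref{cor:inverse_of_Psi} is needed. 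You instead factor $\varDelta_S = \tilde{\Upsilon}_L^{1/2}\bigl(\sum_{s=0}^{S-1}J^{s}\bigr)\tilde{\Upsilon}_L^{1/2}$ with $J = \tilde{\Upsilon}_L^{1/2}\tilde{\Xi}\tilde{\Upsilon}_L^{1/2}$ and establish the two-sided bound $I\pm J\succ 0$; the ``$-$'' side via $\tilde{\Upsilon}_L^{-1}-\tilde{\Xi} = (\tilde{\Upsilon}_L^{-1}-\tilde{\Psi}) + \Delta \succ 0$ is a correct and clean use of Remark~\ref{cor:inverse_of_Psi} together with $\Delta\succ 0$. Your route costs a square root and the spectral theorem but handles all $S$ uniformly without an even/odd case split, and it delivers the stronger byproduct $\rho(\tilde{\Upsilon}_L\tilde{\Xi})<1$, i.e., convergence of the truncated nested iteration itself, which the paper's argument does not give. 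The paper's route is more elementary and needs only $\tilde{\Upsilon}_L^{-1}+\tilde{\Xi}\succ 0$. Both correctly rely on $L$ being even so that Lemma~\ref{lem:inverse_of_Psi} supplies $\tilde{\Upsilon}_L\succ 0$ and $\tilde{\Upsilon}_L^{-1}-\tilde{\Psi}\succeq 0$.
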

        \begin{proof}
            Noting that for each outer iteration $s$ of Algorithm~\ref{alg:NBJM}, we execute $L$ number of inner iterations with $\tilde{\theta}^{(0)} = 0$. Therefore, $\tilde{\delta}^{(s)} = r_{\tilde{\delta}} + (\tilde{\Upsilon}_{L}\tilde{\Xi})\tilde{\delta}^{(s-1)} = \varDelta_{S}r_{\tilde{\delta}} + (\tilde{\Upsilon}_{L}\tilde{\Xi})^{S}\tilde{\delta}^{(0)} = \varDelta_{S}r_{\tilde{\delta}}$. 
            Then note that
            $\varDelta_1 = \tilde{\Upsilon}_1 \succ 0$, $\tilde{\Upsilon}_{L} (\tilde{\Upsilon}_{L}^{-1} + \Xi)\tilde{\Upsilon}_{L} \succ 0$ and using $(\tilde{\Upsilon}_{L}\tilde{\Xi})^{s}\tilde{\Upsilon}_{L} = \tilde{\Upsilon}_{L}(\tilde{\Xi}\tilde{\Upsilon}_{L})^{s}$ that
            \begin{align*}
    			\varDelta_{2S} &= \sum_{s=0}^{S-1}
    			(\tilde{\Upsilon}_{L}\tilde{\Xi})^{s} \tilde{\Upsilon}_{L}
    			(\tilde{\Upsilon}_{L}^{-1} + \Xi) \tilde{\Upsilon}_{L} (\tilde{\Xi}
    			\tilde{\Upsilon}_{L})^{s}  \\ 
    			&= \sum_{s=1}^{S-1}  (\tilde{\Upsilon}_{L}\tilde{\Xi})^{s}
    			\tilde{\Upsilon}_{L} (\tilde{\Upsilon}_{L}^{-1} + \Xi) \tilde{\Upsilon}_{L} (\tilde{\Xi}
    			\tilde{\Upsilon}_{L})^{s} \\ 
    			&\quad + \tilde{\Upsilon}_{L} (\tilde{\Upsilon}_{L}^{-1} + \Xi)
    			\tilde{\Upsilon}_{L} \succ 0, 
    		\end{align*}
    		and
    		\begin{align*}
    			\varDelta_{2S+1} &= \sum_{s=0}^{2S-1}
    			(\tilde{\Upsilon}_{L}\tilde{\Xi})^{s}\tilde{\Upsilon}_{L} +
    			(\tilde{\Upsilon}_{L}\tilde{\Xi})^{2{S}}\tilde{\Upsilon}_{L}  
    			\\ 
    			&= 
    			\varDelta_{2S} + (\tilde{\Upsilon}_{L}\tilde{\Xi})^{S} \tilde{\Upsilon}_{L}
    			((\tilde{\Upsilon}_{L}\tilde{\Xi})^{S})^{\prime}\succ 0,
    		\end{align*}
    		for ${S}\in\mathbb{N}$. Therefore, $\varDelta_{S}\succ 0$ and consequently $P_{S} = \varDelta_{S}^{-1} \succ 0$..
        \end{proof}

    \subsection{Decomposable Computations}\label{subsec:decomposable_computations}
        Note that we do not explicitly construct the preconditioner $P_{S}$. Instead, at each PCG step, we execute $S$ number of outer iterations of Algorithm~\ref{alg:NBJM} such that for each outer iteration $s$, $L$ number of inner iterations are performed. At each inner iteration, we need to solve
        \begin{equation}\label{eq:inner_iteration_NBJM}
            \tilde{\Phi} \tilde{\theta}^{(l+1)} = r_{\tilde{\theta}}^{(s)} + \tilde{\Omega}\tilde{\theta}^{(l)}.
        \end{equation}
        Let, $\tilde{\theta} = \col(\hat{\theta}_{0}, \ldots, \hat{\theta}_{T})$ where each $\hat{\theta}_{t} = \blkdiag(\bar{\theta}_{1,t}, \ldots, \bar{\theta}_{V,T})$ for $t\in \setT$. Then, the computations in \eqref{eq:inner_iteration_NBJM} can be decomposed into $V\times (T+1)$ smaller problems
        \begin{equation}\label{eq:decomposed_problem}
            \bar{\Phi}_{v,t} \bar{\theta}_{v,t}^{(l+1)} = \bar{\tau}_{v,t},
        \end{equation}
        for $v\in \setV$ and $t\in\setT$, where $\bar{\tau}_{v,t} = (r_{\tilde{\theta}}^{(s)})_{v,t} + \bar{\Omega}_{v,t}\bar{\theta}_{v-1,t}^{(l)} + \bar{\Omega}_{v+1,t}^{\prime}\bar{\theta}_{v+1,t}^{(l)}$, and $(r_{\tilde{\theta}}^{(s)})_{v,t}$ represents the decomposition of vector $r_{\tilde{\theta}}^{(s)}$, accordingly. Note that for each smaller \eqref{eq:decomposed_problem}, the coefficient matrix $\bar{\Phi}_{v,t}$ is a block banded matrix of size $O(K)$, where the subblocks are of size $O(n_{i,j})$ independent of $K$, $N$ and $T$. Therefore for each $v\in\setV$ and $t\in setT$, the linear system \eqref{eq:decomposed_problem} can be solved using block Cholesky factorization of $\bar{\Phi}_{v,t}$ with arithmetic complexity of $O(K)$. Note that this factorization is performed once at the start of Algorithm~\ref{alg:pcg}. 
        Therefore, the computations at preconditioning lines~\ref{step:precond0} and~\ref{step:preconditioning} decompose into $V\times (T+1)$ parallel threads with per thread arithmetic complexity of $O(K)$. By defining $\tilde{\delta} = \col(\hat{\delta}_{0}, \ldots, \hat{\delta}_{T})$ where each $\hat{\delta}_{t} = \blkdiag(\bar{\delta}_{1,t}, \ldots, \bar{\delta}_{V,T})$ and for $t\in \setT$ and partitioning $\Delta$ and $\tilde{\Xi}$ accordingly, the matrix vector products at line~\ref{step:matrix_vector_product} of Algorithm~\ref{alg:pcg} 
        can also be decomposed into $V\times (T+1)$ parallel threads connected in a 2D grid structure with localized data exchange.
        In Table~\ref{tab:PCG_complexity_analysis}, we present the arithmetic complexity of each line of Algorithm~\ref{alg:pcg}. It shows the overall arithmetic complexity as well as per-thread, along with the overhead of scalar-value data exchange for each thread in case of parallel implementation on $V\times(T+1)$ processors. The arithmetic complexity of each PCG step is dominated by the preconditioning line~\ref{step:preconditioning}. With $L$ and $S$ being fixed, the overall arithmetic complexity of each PCG step becomes $O(KNT)$. Note that the computations involved to form the dot products at lines~\ref{step:alpha_update} and~\ref{step:beta_update} and to update the infinity norm of the residual at line~\ref{step:error_update} are sequential in nature. In case of parallel implementation, they can be carried out by performing a backward-forward across the 2D grid with localized scalar data exchange.
        It should be noted that the total arithmetic complexity of PCGM to converge to a solution of desired accuracy depends on the total number of steps performed. In worst case, the number of steps can be of $O(KNT)$. This will result in $O((KNT)^2)$ arithmetic complexity. However, it often performs much better than this worst case complexity, as illustrated in the next section.
        
        \renewcommand{\arraystretch}{1.5}
	\begin{table}[t]
		\begin{tabular}{l|l|ll}
			\hline
			\rowcolor[HTML]{b0f193} 
			&
			\textbf{\begin{tabular}[c]{@{}l@{}}Single
					Thread \end{tabular}} &
			\multicolumn{2}{l}{\cellcolor[HTML]{b0f193}\textbf{\begin{tabular}[c]{@{}l@{}} $V\times~(T+1)$ Parallel Threads\end{tabular}}} \\ \hline
			\rowcolor[HTML]{dcf8fb} 
			\textbf{Alg.~\ref{alg:pcg}}
			& \textbf{Computations}
			& \textbf{\begin{tabular}{l}
					\!\!\!\!Computations\\[-2pt]
					\!\!\!\!\!\! per
					thread\end{tabular}}
			&
			\textbf{\begin{tabular}{l}\!\!\!\!
					Data
					xchg.\\[-2pt]  
					\!\!\!\! per
					thread \end{tabular}}
			\\ \hline  
			Line~\ref{step:matrix_vector_product}:	& $O(KNT\check{n}^{3})$           &$O(K\check{n}^{3})$        &$O(K\check{n})$       \\ \hline
			Line~\ref{step:alpha_update}:    		& $O(KNT\check{n}^{3})$          &$O(K\check{n}^{3})$        &$O(1)$                \\ \hline
			Line~\ref{step:variable_update}:        & $O(KNT\check{n}^{3})$          &$O(K\check{n}^{3})$        &0                    \\ \hline
			Line~\ref{step:residual_update}:        & $O(KNT\check{n}^{3})$          &$O(K\check{n}^{3})$        &0                    \\ \hline
			Line~\ref{step:error_update}:			& $O(KNT\check{n}^{3})$          &$O(K\check{n}^{3})$ 	   &$O(1)$              \\ \hline
			Line~\ref{step:preconditioning}:		& $O(LSKNT\check{n}^{3})$        &$O(K\check{n}^{3})$        &$O(LSK\check{n})$   \\ \hline
			Line~\ref{step:beta_update}:			& $O(KNT\check{n}^{3})$          &$O(K\check{n}^{3})$        &$O(1)$                \\ \hline
			Line~\ref{step:direction_update}:    	& $O(KNT\check{n}^{3})$          &$O(K\check{n}^{3})$        &0   	              \\ \hline
		\end{tabular}\\
		\caption{Arithmetic complexity and data-exchange overhead of the main loop of Algorithm~\ref{alg:pcg} where $\check{n} =\max_{i,j}(n_{i,j})$, where $n_{i,j}$ is the size of the state $x_{i,j}$; and $L$, $S$ are the fixed number of inner-outer iterations of Algorithm~\ref{alg:NBJM}.}
		\label{tab:PCG_complexity_analysis}
	\end{table}

\section{Numerical Experiments}\label{sec:numerical_experiments}
    In this section, we present results of the numerical experiments for solving LQ optimal control for two cases.
    
    \textbf{Case (I):} We consider a 2D grid of mass-spring-damper systems with $N\times K$ masses, where each sub-system $(i,j)$ has the discrete-time state space dynamics of the form \eqref{eq:2d_graph_dynamics} with four states and two inputs. The cost function has $Q_{i,j,t} = I \in \RR^{4\times 4}$ and $R_{i,j,t} = 2$ for $i \in \mathcal{K}$, $j \in \mathcal{N}$, $t \in \mathcal{T}$. The mass, spring constant, and damping coefficient parameters are randomly selected between $0.8$ and $1.5$ to create a heterogeneous network.
    
    \textbf{Case (II):} We consider an LQ optimal control problem of an automated irrigation network under distant-downstream control \cite{Cantoni2007control} shown in Fig.~\ref{fig:water_irigation_network}. The network consists of $N$ controlled pools in the primary channel and $K$ controlled pools in each secondary channel originating from the primary channel's pools. The dynamics of each controlled pool can be described by a sub-system in the form of \eqref{eq:2d_graph_dynamics} with $F_{i,j,t} = 0$ and $G_{i,j,t} = 0$. That is, there is a direct coupling between the states of sub-systems $(i,j)$ and $i,j-1$ in the primary channel and $(i,j)$ and $i-1,j$ in the secondary channels. However, there is no inter-coupling between the states of subsystems in the secondary channels. The control inputs $u_{i,j,t}$ are the adjustment of the water-level reference, and the states $x_{i,j,t}$ are the deviation of state trajectory from equilibrium. For each sub-system $i\in\mathcal{K},~j \in\mathcal{N}$, the number of states $n_{i,j} = 4$, and the number of inputs $m_{i,j} = 1$. The corresponding cost function has $Q_{i,j} = I \in \RR^{4\times 4}$ and $R_{i,j} = 1$.

    The LQ optimal control problem for both cases is solved by solving the corresponding KKT conditions of the form \eqref{eq:kkt_conditions}. We compare results of the following three methods for solving \eqref{eq:block_tridiagonal_system}. 
    \begin{itemize}
        \item The proposed PCGM in Algorithm~\ref{alg:pcg}, with proposed nested block Jacobi preconditioning with $L=2$ and $S=2$.
        \item A standard nested block Jacobi method (see e.g. \cite{Cao2000convergence}).
        \item MATLAB's Backslash (CHOLMOD \cite{Chen2008cholmod}) with sparse matrix representation
    \end{itemize}
    The experiment are conducted with $N = K = T$ varied from $10$ to $100$ such that the number of scalar variables in the largest problems for both cases are of $O(10^6)$.
    We consider the processor time of a single thread implementation which serves as a proxy for the overall arithmetic complexity of each method. We set the stopping criteria for the infinity norm of the residuals of the PCGM and standard nested block Jacobi method to $\epsilon = 10^{-9}$. Note that for solving \eqref{eq:block_tridiagonal_system} with the standard nested block Jacobi method the outer iterations are performed until stopping criteria is met, while the inner iterations are fixed to $L=2$. All computations are performed on a Windows PC with Intel Core-i7 processor with $2.4GHz$ clock speed and 16GB of RAM.

    \textbf{Case (I): 2D Grid of spring-mass-damper network.} Fig.~\ref{fig:SMD_PCG_J_Iters} shows the total number of steps of the proposed PCGM with $L=2,~S=2$ and total number of outer steps of the standard NBJM with $L=2$ against the total number of variables in \eqref{eq:block_tridiagonal_system}. It shows that the standard NBJM consistently takes a large number of steps, in the order of thousands. On the other hand, our proposed PCGM consistently takes fewer number of steps lesser than hundred as compared to the size of the system. For instance, for $K = N = T = 40$, the total number of unknown are $O(10^5)$ while the total number of PCGM steps performed are $81$ only. Such small number of steps of proposed PCGM are particularly favourable for a distributed implementation resulting in a small data exchange overhead.
    
    Fig.~\ref{fig:SMD_log_Time_Normalized} shows the normalized processor time for solving \eqref{eq:block_tridiagonal_system} using all three approaches. It shows that along the line $K=N=T$, the per step arithmetic complexity of PCGM and standard NBJM scales as $O(K^{3})$. However, due to the large number of step performed by NBJM, the overall processor time required is an order larger than the proposed PCGM. In contrast, for smaller systems MATLAB's Backslash performs well but its overall processor time scales as $O(K^6)$. This is because Backslash invokes sparse cholesky factorization from CHOLMOD \cite{Chen2008cholmod}. Despite its superior capabilities including permutation of variables for computational benefit, the number of fill-ins during the factorization increases exponentially since the structure within the sub blocks of $\Delta$ is lost. Moreover, for problems with $K = N = T$ greater than $60$, processor time for Backslash also includes memory management overhead, thus no longer a good proxy for arithmetic complexity. For problems with $K = N = T = 90$ and above, Backslash went out of memory. While the proposed PCGM performs much better with an added scope of the decomposibility of computations.

    Fig.~\ref{fig:SMD_conditioning} illustrates the actual condition number $\kappa(\Delta)$ and the achieved conditioning of the transformed system $\kappa(P_{S}^{-1/2} \Delta P_{S}^{-1/2})$ using the structured preconditioner $P_{S}$ given in Thm.~\ref{thm:posdef_preconditioner}. It can be seen that the conditioning is improved by almost two order of magnitude with the proposed approach.

    \textbf{Case (II): Automated Irrigation Network.} Fig.~\ref{fig:WIN_PCG_J_Iters} compares the total number of PCGM steps and standard NBJM steps with the size of the system. It can be seen that the number of steps performed by NBJM increases as $O(K^2)$ with the size of the problem reaching up to $10^5$ for $K=N=T=50$. As such, for larger systems, the results of NBJM are not included. By contrast, the increase in the number of steps of PCGM is $O(K)$ but the steps performed are much smaller than the size of the system.
    
    Fig.\ref{fig:WIN_log_Time_Normalized} shows the overall processor time of the three approaches. The processor time of NBJM increases as $O(K^5)$ and that of Backslash is $O(K^6)$. Whereas the increase in the processor time of the proposed PCGM is $O(K^4)$ which reflects the linear increase in the number of steps of PCGM along the line $K=N=T$. Moreover, for problem size larger than $K=N=T=40$, Backslash suffers from memory management issues and it went out of memory for problems larger than $K=N=T=60$. 

    Finally, Fig.\ref{fig:WIN_conditioning} shows the condition number of $\Delta$ in \eqref{eq:block_tridiagonal_system} and $\kappa(P_{S}^{-1/2} \Delta P_{S}^{-1/2})$ of the preconditioned system. It can be seen that there is an order of magnitude improvement in the conditioning with $L=2$ and $S=2$.

     \begin{figure}[!ht]
        \begin{center}
            \includegraphics[width = 0.48\textwidth]{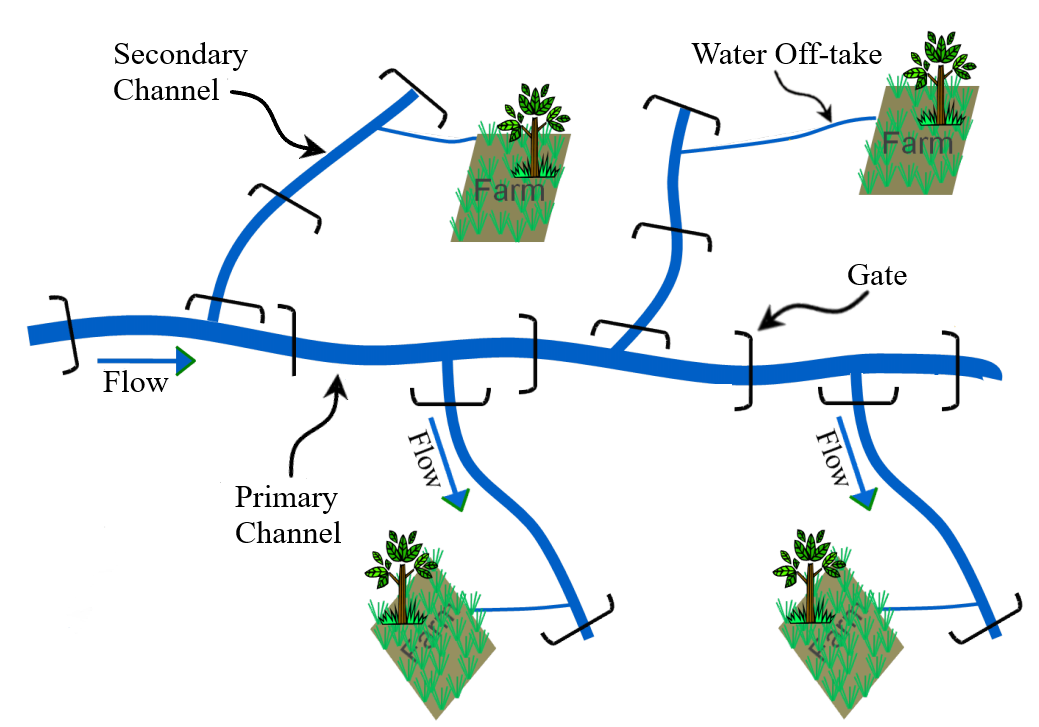} 
            \caption{A illustration of automated water irrigation network}
            \label{fig:water_irigation_network}
        \end{center}
    \end{figure}
    \begin{figure*}[htp]
        \centering
        \begin{minipage}[b]{0.32\textwidth}
            \centering
            \includegraphics[width=0.99\columnwidth,height=0.73\columnwidth]{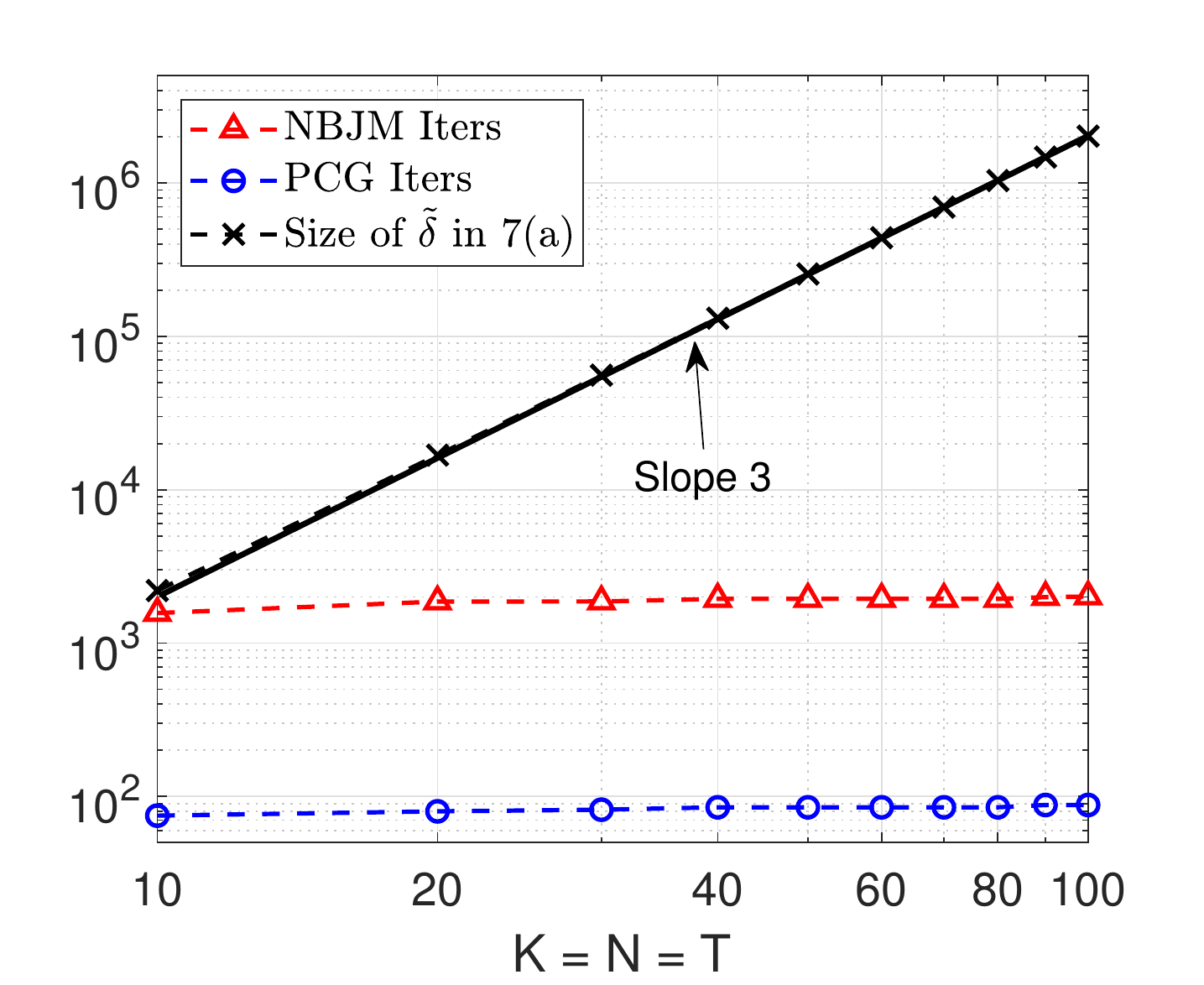}
            \caption{Case (I): Total no. of steps of PCGM and standard NBJM}
            \label{fig:SMD_PCG_J_Iters}
        \end{minipage}\hfill
        \begin{minipage}[b]{0.32\textwidth}
            \centering
            \includegraphics[width=0.99\columnwidth,height=0.73\columnwidth]{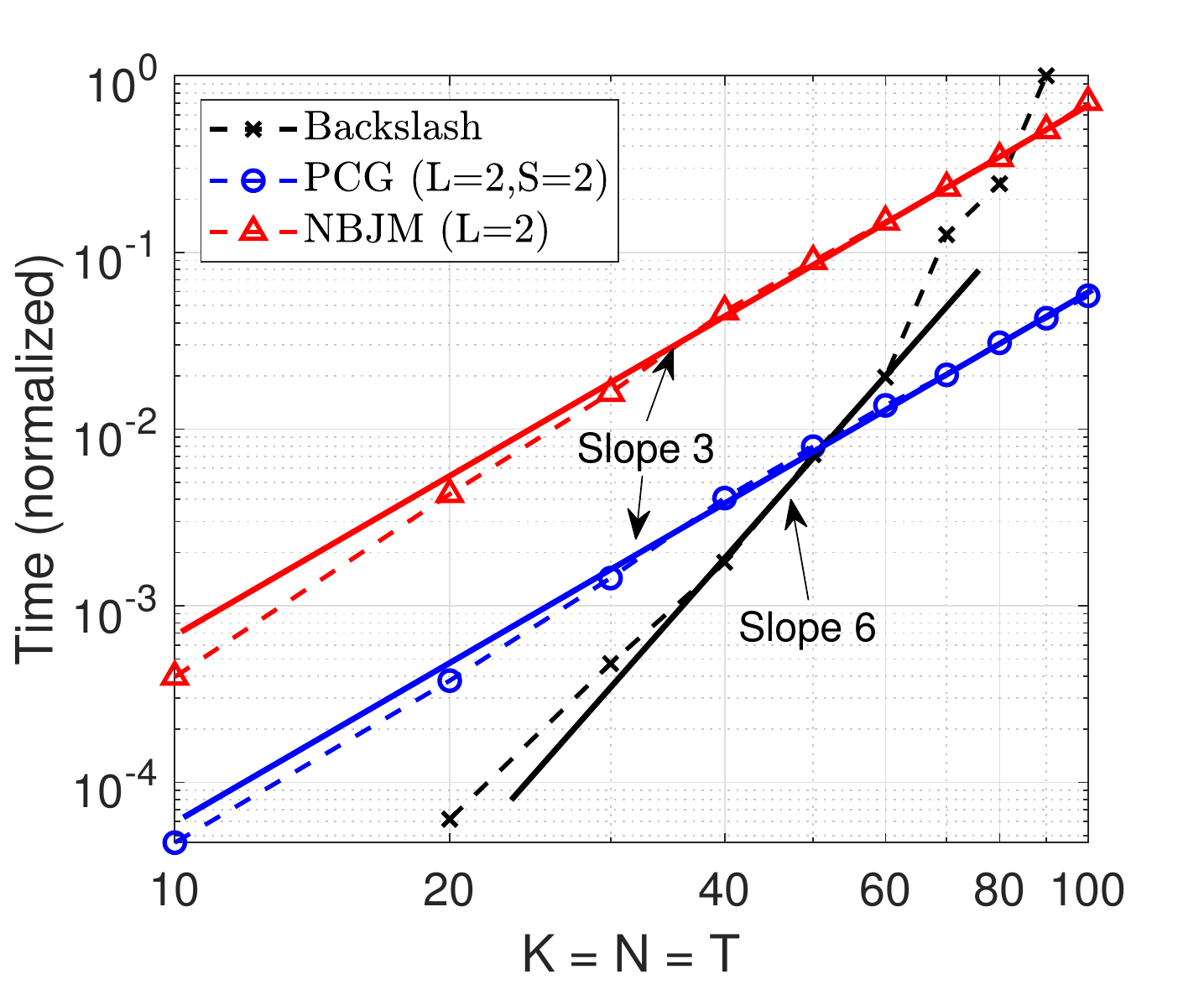}
            \caption{Case (I): Normalized processor time to solve \eqref{eq:block_tridiagonal_system}}
            \label{fig:SMD_log_Time_Normalized}
        \end{minipage}\hfill
        \begin{minipage}[b]{0.32\textwidth}
            \centering
            \includegraphics[width=0.99\columnwidth,height=0.73\columnwidth]{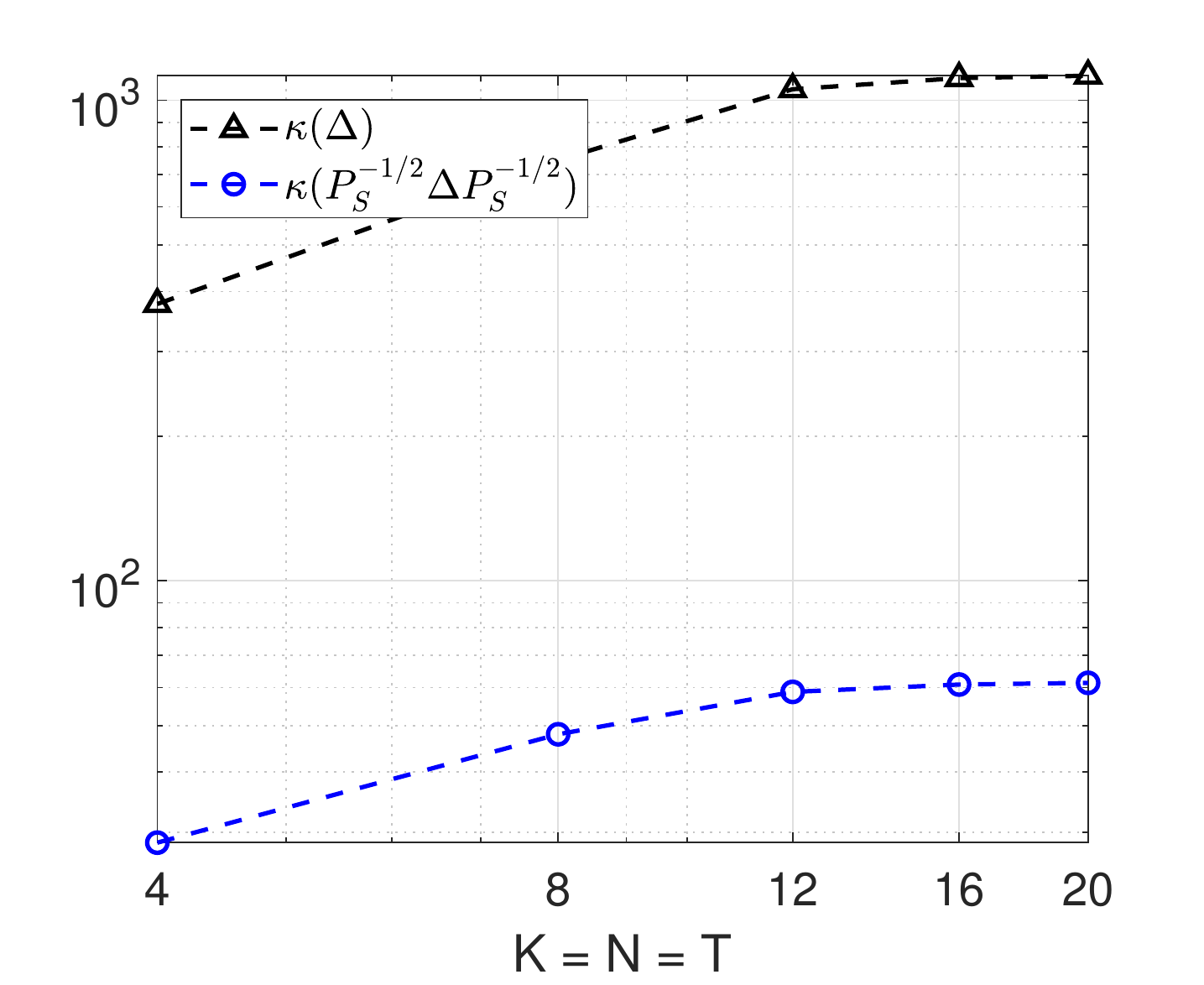}
            \caption{Case (I): $\kappa(\Delta)$ and $\kappa(P_{S}^{-1/2} \Delta P_{S}^{-1/2})$ with $L=2,~S=2$} 
            \label{fig:SMD_conditioning}
        \end{minipage}
    \end{figure*}
    \begin{figure*}[htp]
        \centering
        \begin{minipage}[b]{0.32\textwidth}
            \centering
            \includegraphics[width=0.99\columnwidth,height=0.73\columnwidth]{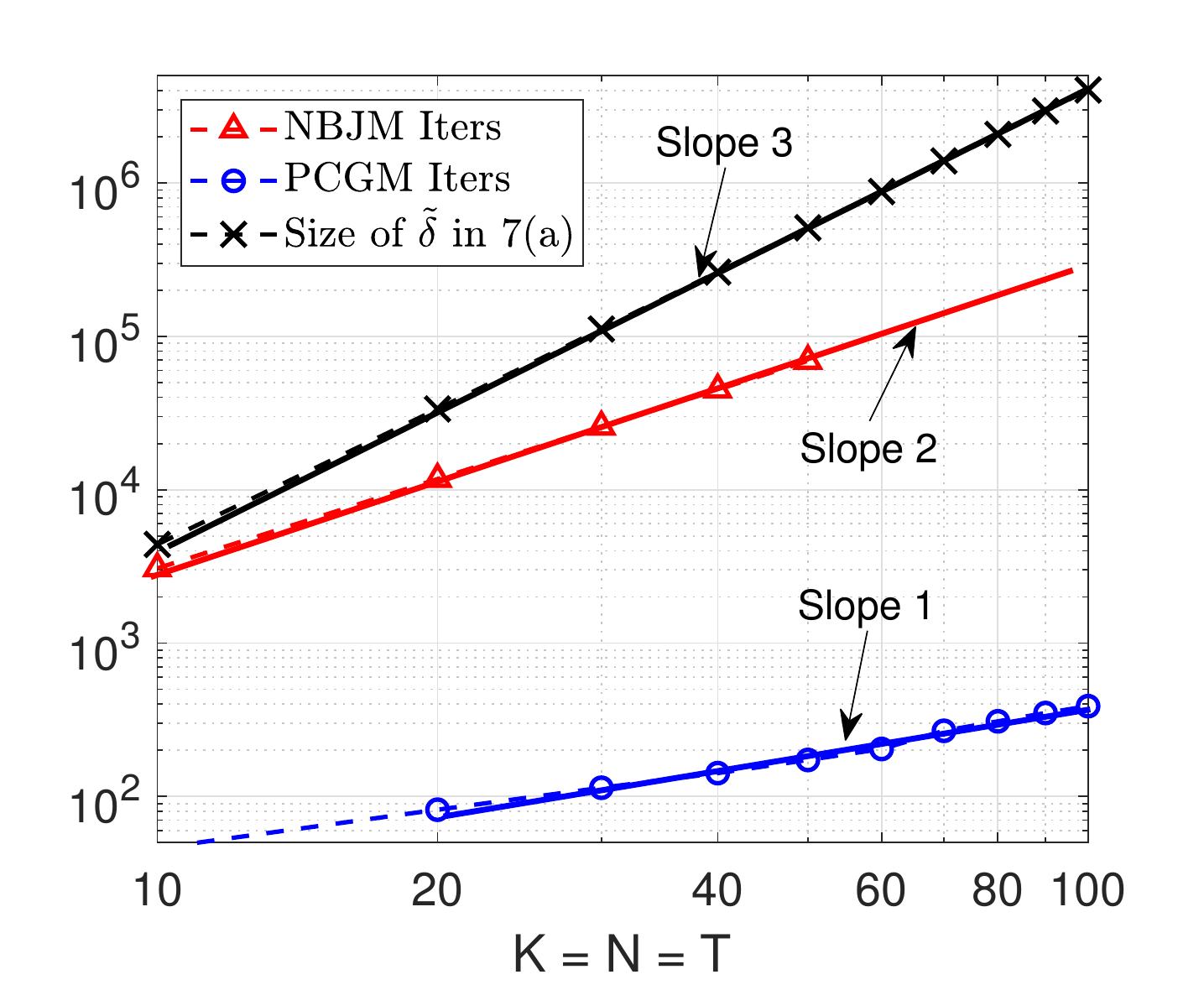}
            \caption{Case (II): Total no. of steps of PCGM and standard NBJM}
            \label{fig:WIN_PCG_J_Iters}
        \end{minipage}\hfill
        \begin{minipage}[b]{0.32\textwidth}
            \centering
            \includegraphics[width=0.99\columnwidth,height=0.73\columnwidth]{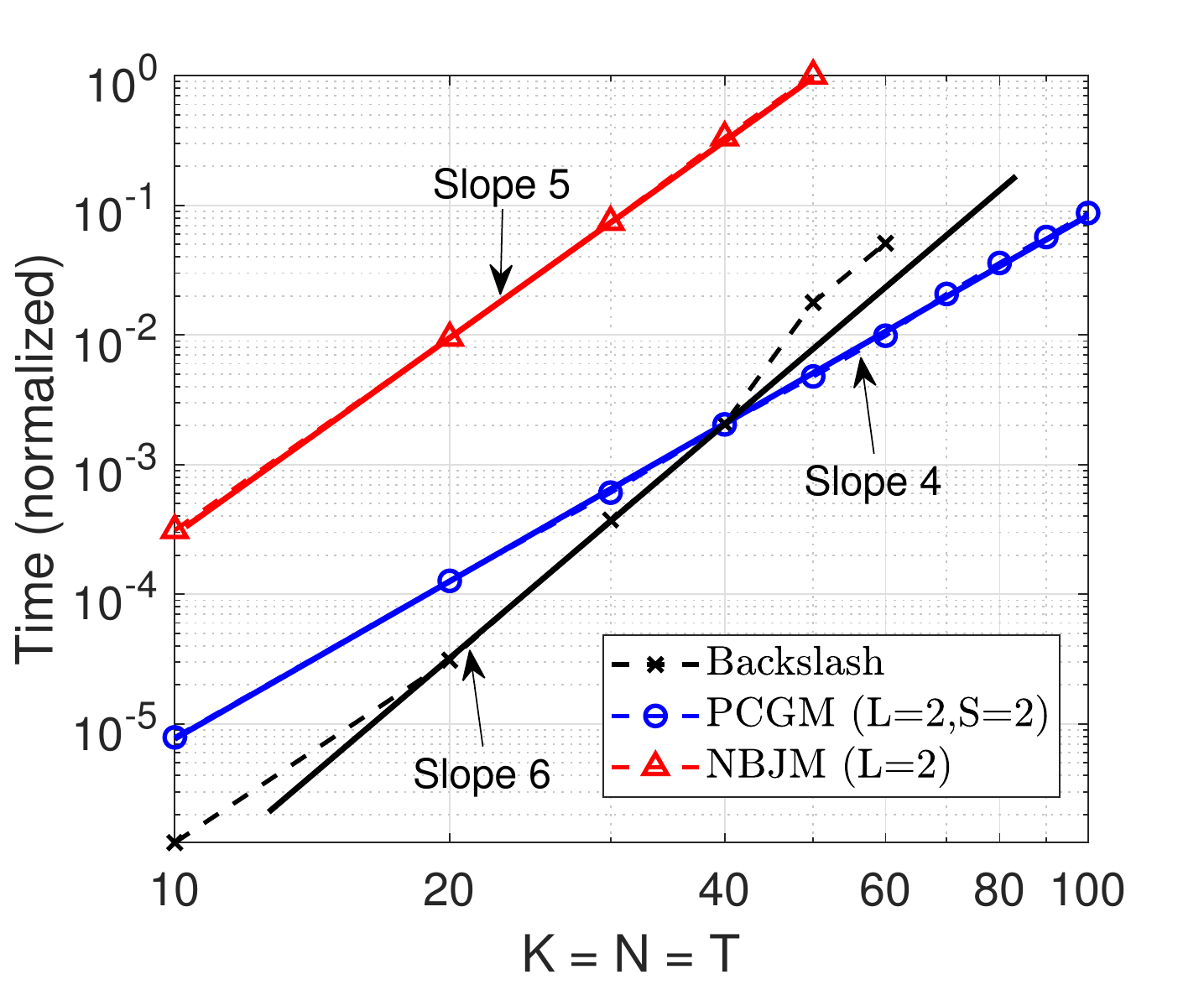}
            \caption{Case (II): Normalized processor time to solve \eqref{eq:block_tridiagonal_system}}
            \label{fig:WIN_log_Time_Normalized}
        \end{minipage}\hfill
        \begin{minipage}[b]{0.32\textwidth}
            \centering
            \includegraphics[width=0.99\columnwidth,height=0.73\columnwidth]{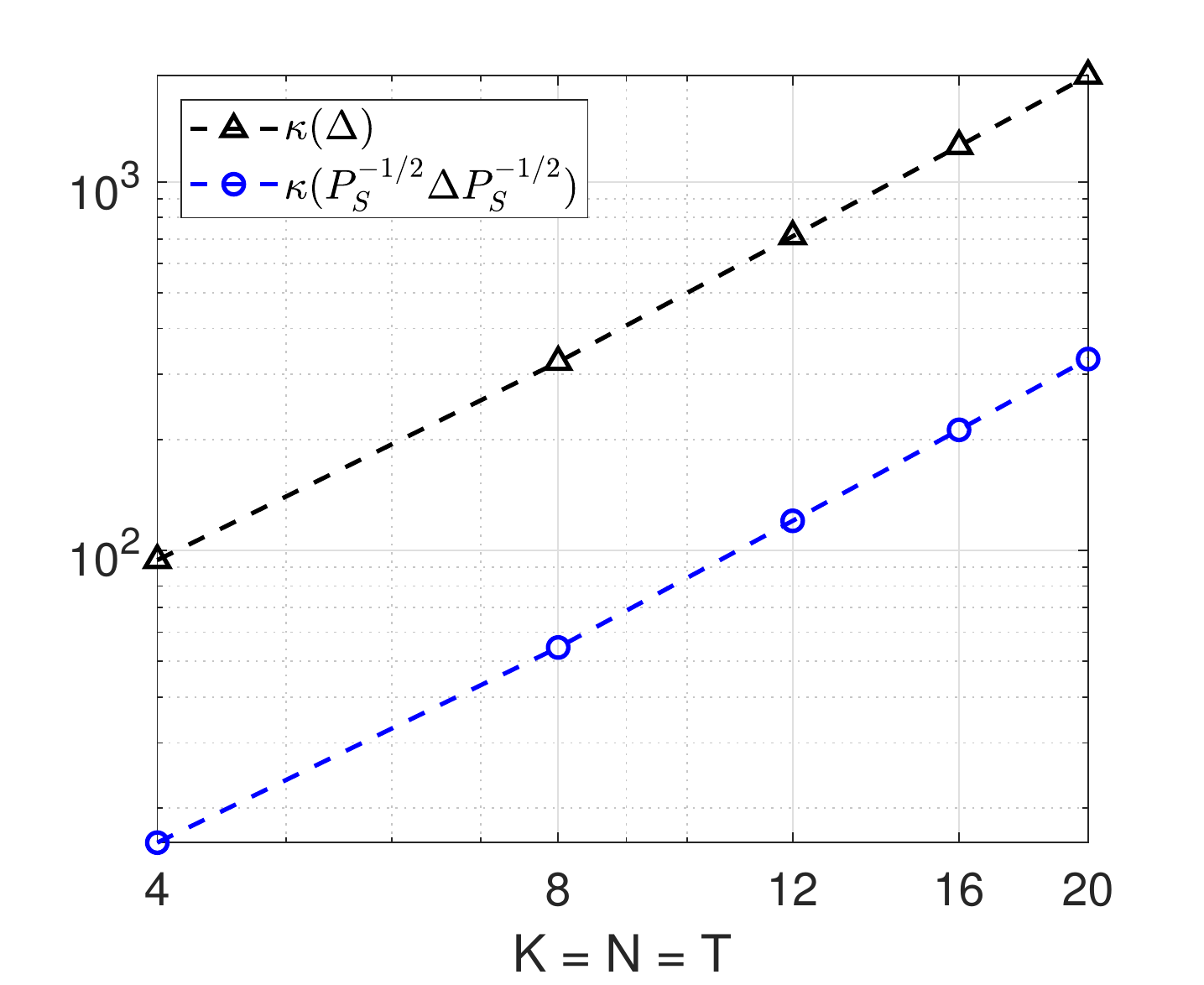}
            \caption{Case (II): $\kappa(\Delta)$ and $\kappa(P_{S}^{-1/2} \Delta P_{S}^{-1/2})$ with $L=2,~S=2$} 
            \label{fig:WIN_conditioning}
        \end{minipage}
    \end{figure*}

\section{Conclusions}\label{sec:conclusions}
    In this paper, we have proposed a structured PCGM for solving LQ optimal control problems of systems with 2D grid structure. The per step arithmetic complexity of the proposed approach scales linearly in each spatial dimension as well as in temporal dimension. The computations are amenable to distributed implementation on $O(NT)$ parallel processors with localized data exchange mirroring the 2D grid structure of the problem. Future works include the development of analytical bounds on the achieved conditioning and an implementation of the proposed approach on parallel processing architectures. It will enable us to perform a deeper analysis of the distributed performance of proposed approach along with the issues related with the data exchange overhead.

\section{Acknowledgement}\label{sec:acknowledgement}
    We would like to thank Prof. Michael Cantoni and Dr. Farhad Farokhi from the University of Melbourne, Australia for their insights and the helpful discussions about this work. This work was supported by the Air Force Office of Scientific Research Grant FA2386-19-1-4076 and the NSW Defence Innovation Network.  

\bibliographystyle{IEEEtran}

\end{document}